\theoremstyle{plain}
\newtheorem{THM}{Theorem}[section]
\newtheorem*{THM*}{Theorem}
\newtheorem{PROP}[THM]{Proposition}
\newtheorem{LEMMA}[THM]{Lemma}
\newtheorem{COR}[THM]{Corollary}
\newtheorem{CLAIM}{Claim}
\theoremstyle{definition}
\theoremstyle{remark}
\newcommand{\pr}[1]{\mathbb{P} \left[ #1 \right]}
\newcommand{\prp}[1]{\mathbb{P}_{\mathcal{P} } \left[ #1 \right]}
\newcommand{\eps}{\varepsilon}
\newcommand{\gnd}{G_{n, d}}
\newcommand{\gndo}{G_{n, d_1}}
\newcommand{\gndt}{G_{n, d_2}}
\newcommand{\pnd}{\mathcal{P}_{n, d}}
\newcommand{\mofp}{\mathcal{M}(P)}
\newcommand{\pndt}{\mathcal{P}_{n, d_2}}
\newcommand{\piper}{\pi_{\alpha}}
\newcommand{\pf}{\mathcal{P}(F)}
\title{Anagram-free colorings of graphs}
	\date{}
 \author{
Nina Kam\v{c}ev \thanks{Department of Mathematics, ETH, 8092 Zurich. Email: nina.kamcev@math.ethz.ch.}
\and
Tomasz \L uczak \thanks{Faculty of Mathematics and Computer Science, Adam Mickiewicz University, Pozna\'n, Poland.
Email: tomasz@amu.edu.pl.
Research supported by NCN grant 2012/06/A/ST1/00261.
 }
\and
Benny Sudakov \thanks{Department of Mathematics, ETH, 8092 Zurich.
Email: benjamin.sudakov@math.ethz.ch.
Research supported in part by SNSF grant 200021-149111.}
}
\begin{document}
    \maketitle
    \setcounter{tocdepth}{3}
    
    \abstract{A sequence $S$  is called anagram-free if it contains no consecutive symbols $r_1 r_2\dots r_k r_{k+1} \dots r_{2k}$ such that  $r_{k+1} \dots r_{2k}$ is a permutation of the block $r_1 r_2\dots r_k$. Answering a question of Erd\H os and Brown, Ker\"anen constructed an infinite anagram-free sequence on four symbols.
        Motivated by the work of Alon, Grytczuk, Ha\l uszczak and Riordan~\cite{aghr}, we consider a natural generalisation of anagram-free  sequences for graph colorings. A coloring of the vertices of a given graph $G$ is called \emph{anagram-free} if the sequence of colors on any path in $G$ is anagram-free. We call the minimal number of colors needed for such a
coloring the \emph{anagram-chromatic} number of $G$.

		In this paper we study the anagram-chromatic number of several classes of graphs like trees, minor-free graphs and bounded-degree graphs. Surprisingly, we show that there are bounded-degree graphs (such as random regular graphs) in which anagrams cannot be avoided unless we basically give each vertex a separate color.
    
    \section{Introduction}
		The study of non-repetitive colorings was conceived by a famous result of Thue \cite{thue} from 1906. He showed that there exists an infinite sequence $S$ on an alphabet of three symbols in which no two adjacent blocks (of any length) are the same. In other words, $S$ contains no sequence of \emph{consecutive} symbols $r_1 r_2 \dots r_{2n}$ with $r_{i} = r_{i+n}$ for all $i \leq n$. Note that it is not a priori obvious that the minimal size of the alphabet necessary for an infinite non-repetitive sequence is even finite. Thue's result is interesting in its own right, but it also has influential and surprising applications, the most famous one probably occurring in a solution to the Burnside problem for groups by Novikov and Adjan \cite{novikov}. Thue-type problems lead to the development of Combinatorics on Words, a new area of research with many interesting connections and applications.
		 
		 Generalisations of Thue's result occurred in two directions. Firstly, the setting has been changed from sequences to, e.g., the real line, the lattice $\mathbb{Z}^n$, or graphs. Secondly, repetitions as a forbidden structure can be replaced by anagrams, sums, patterns etc. For a formal treatment and references to these problems, we refer the reader to the survey of Grytczuk \cite{gr08}. Here we focus  on graph colorings, and the structure we are avoiding are anagrams.
		 
		 A sequence $r_1 r_2\dots r_n r_{n+1} \dots r_{2n}$ is called an \emph{anagram} if the second block, $r_{n+1} \dots r_{2n}$, is a permutation of $r_1 r_2\dots r_n$. A long standing open question of Erd\H os \cite{erdosq} and Brown \cite{brown} was whether there exists a sequence on \{0, 1, 2, 3\} containing no anagrams. We call such sequences \emph{anagram-free}. It is easy to check that no such sequence on three symbols exists. In 1968 Evdokimov \cite{evdokimov} showed that the goal can be achieved with 25 symbols, which was the first finite upper bound. 
We remark that a finite bound can also be deduced from the Lov\'asz Local Lemma 
which, of course, has not been known in the time Evdokimov proved his result.
Later Pleasants \cite{pleasants} and Dekking \cite{dekking} lowered this number to five. Finally, Ker\"anen~\cite{keraenen} constructed arbitrarily long anagram-free sequences on four symbols using Thue's idea -- given a  non-repetitive finite sequence $S$ on symbols $\{0, 1, 2, 3\}$, we can replace each symbol by a longer word on the same alphabet in a way that yields a new, longer non-repetitive sequence $\overline{S}$. This answered the question of Erd\H os and Brown, but at the same time 
opened new avenues for further studies; some of them can be found in \cite{gr08}.
		 
		 Bean, Ehrenfeucht and McNulty \cite{bean} have studied the problem of non-repetitive colorings in a continuous setting. A coloring of the real line is called \emph{square-free} if no two adjacent intervals of the same length are colored in the same way. More precisely, for any  intervals $I = [a, b]$ and $J= [b,c]$ of the same length $L>0$, there exists a point $x \in I$ whose color is different from $x+L$. In \cite{bean}, they showed that there exist square-free two-colorings of the real line. Grytczuk \cite{gr08} also describes a strong variant of square-free colorings, which basically defines anagrams in the continuous setting and asks for an anagram-free coloring.
		 
		 Alon, Grytczuk, Ha\l uszczak and Riordan
		~\cite{aghr} proposed another variation on the non-repetitive theme. Let $G$ be a graph. A vertex coloring $c: V(G) \to \mathcal{C}$ is called \emph{non-repetitive} if any path in $G$ induces a non-repetitive sequence. Define the \emph{Thue number} $\pi(G)$ as the minimal number of colors in a non-repetitive coloring of $G$. 
It is easy to see that this number is a strengthening of the classical chromatic number, as well as the star-chromatic number. It turns out that the Thue number is bounded for several interesting classes of graphs, e.g. $\pi(P_n) \leq 3$ for a path $P_n$ of length $n$ (directly from Thue's Theorem), and $\pi(T) \leq 4$ for any tree $T$. 
Alon {\it et al.}~\cite{aghr} showed that $\pi(G) \leq c \Delta(G)^2$, where $c$ is a constant and $\Delta(G)$ denotes the maximum degree of $G$. They also found a graph $G$ with $\pi(G) \geq \frac{c^\prime \Delta^2}{\log \Delta} $. Closing the above gap remains an intriguing open question. 
Another interesting problem is to decide if the Thue number of planar graphs is finite. 
		 A survey of Grytczuk \cite{gr07} lays out some progress in this direction, as well as numerous related questions on non-repetitive graph colorings. 
		
		In the concluding remarks of their paper, Alon {\it et al.}~\cite{aghr}~ suggested investigating anagram-free colorings of graphs, which we do here.
		Let $G$ be a graph and let $c: V(G) \to \mathcal{C}$ be its vertex coloring. Two vertex sets $V_1$ and $V_2$ have \emph{the same coloring} if they have the same number of occurrences of each color, i.e.~$|c^{-1}(a) \cap V_1| = |c^{-1}(a) \cap V_2|$ for each $a \in \mathcal{C}$. An \emph{anagram} is a path $v_1v_2 \dots v_{2n}$ in $G$ whose two segments $v_1\dots v_{n}$ and $v_{n+1} \dots v_{2n}$ have the same coloring. We denote the minimum number of colors in an anagram-free coloring of $G$ by $\piper(G)$, and call it the \emph{anagram-chromatic number of $G$}. 
Clearly $\piper(G) \leq n$ for any $n$-vertex graph $G$. The result of Ker\"anen \cite{keraenen} states that $\piper(P_n) \leq 4$ for a path $P_n$ of length $n$, so it is only natural to ask what is $\piper$ for other families of graphs. It turns out that as soon as we move on from paths, the situation gets very different. We first show that the anagram-chromatic number of a binary tree already increases with the number of vertices.
		 \begin{PROP} \label{prop:trees} 
		    Let $T_h$ be a perfect binary tree of depth $h$, i.e.~every non-leaf has two children and there are $2^h$ leaves, all at distance $h$ from the root. Then
		    $$ \sqrt{\frac{h}{\log_2 h}} \leq \piper(T_h) \leq h+1.$$
		 \end{PROP}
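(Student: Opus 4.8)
The plan is to treat the two bounds separately.

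\emph{Upper bound.} I would colour every vertex by its depth, using the $h+1$ colours $0,1,\dots,h$. Any path in a tree passes through its two endpoints' least common ancestor exactly once, and along the path the depths strictly decrease by $1$ at each step down to the depth of this apex and then strictly increase. Given an even path $v_1v_2\cdots v_{2n}$, reverse it if necessary so that its apex $v_p$ satisfies $p\le n$ (being an anagram is invariant under reversal); then $v_{n+1}\cdots v_{2n}$ is a strictly descending path whose depths form the set $\{D+n-p+1,\dots,D+2n-p\}$, where $D=\operatorname{depth}(v_p)$. Its largest depth $D+2n-p$ is strictly larger than $D+n-1$, which in turn bounds every depth occurring in the other half $v_1\cdots v_n$. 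Hence the two halves have different colour-multisets, so no path is an anagram and $\piper(T_h)\le h+1$.

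\emph{Lower bound.} Suppose $c$ is an anagram-free colouring of $T_h$ with $k$ colours; it is proper (a monochromatic edge would be an anagram), so $k\ge2$. For a vertex $v$ with a complete subtree of depth at least $\ell-1$ below it, let $\mathcal D_\ell(v)$ be the set of colour \emph{profiles} — the vectors recording how many times each colour occurs — of the $2^{\ell-1}$ downward paths on $\ell$ vertices starting at $v$. Since these are profiles of total weight $\ell$ over $k$ colours, $|\mathcal D_\ell(v)|\le\binom{\ell+k-1}{k-1}$, and with $v_1,v_2$ the children of $v$ one has $\mathcal D_\ell(v)=\{c(v)\}+\bigl(\mathcal D_{\ell-1}(v_1)\cup\mathcal D_{\ell-1}(v_2)\bigr)$. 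The heart of the proof is the claim that anagram-freeness forces $\mathcal D_\ell(v)$ to be exponentially large, say $|\mathcal D_\ell(v)|\ge 2^{\ell/(Ck)}$ for an absolute constant $C$. The mechanism is this: if $\mathcal D_{\ell-1}(v_1)$ and $\mathcal D_{\ell-1}(v_2)$ had a large common intersection, one could choose a profile realised both by a downward $(\ell-1)$-path $P$ from $v_1$ and by a downward $\ell$-path $P'$ from $v_2$ whose profile exceeds that of $P$ by precisely $c(v)$ — so the colour at the apex cancels — and splice $P$, $v$, $P'$ into one path on $2\ell$ vertices; a direct check shows that its two halves then have equal profiles, i.e.\ it is an anagram. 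Excluding this bounds the overlap of $\mathcal D_{\ell-1}(v_1)$ and $\mathcal D_{\ell-1}(v_2)$, so $|\mathcal D_\ell(v)|\ge|\mathcal D_{\ell-1}(v_1)|+|\mathcal D_{\ell-1}(v_2)|-O(k)$; once these sets have outgrown $k$ this amounts to roughly doubling at each step, and iterating over depth $\ell$ gives the claimed exponential bound.

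Applying the claim to the root with $\ell=h$ and combining with the trivial bound $|\mathcal D_h(\mathrm{root})|\le\binom{h+k-1}{k-1}\le(h+k)^{k}$ gives $h/(Ck)\le k\log_2(h+k)$, which rearranges to $k=\Omega\bigl(\sqrt{h/\log_2 h}\bigr)$; a more careful choice of parameters recovers the stated constant. \textbf{The hardest part is establishing this claim:} turning an overlap of profile-sets into a genuine anagram requires controlling path \emph{lengths} and the apex colour simultaneously, and this is exactly where the binary branching of $T_h$ is essential — on a single path no such lower bound can hold, since by Ker\"anen's theorem a path admits an anagram-free colouring with just four colours.
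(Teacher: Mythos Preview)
Your upper bound is the paper's argument, just written out more carefully; that part is fine.

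The lower bound, however, has a real gap at precisely the point you flag as ``the hardest part''. Your recursion
\[
|\mathcal D_\ell(v)|\ \ge\ |\mathcal D_{\ell-1}(v_1)|+|\mathcal D_{\ell-1}(v_2)|-O(k)
\]
rests on the claim that a large overlap $\mathcal D_{\ell-1}(v_1)\cap\mathcal D_{\ell-1}(v_2)$ forces an anagram. But a shared profile $\pi$ gives two downward $(\ell-1)$-paths $P_1,P_2$ whose concatenation through $v$ has $2\ell-1$ vertices --- odd length, so no anagram split. To make the length even you must drop one endpoint, and then the two halves match only when that endpoint's colour equals $c(v)$; nothing in ``large overlap of profile sets'' guarantees this. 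Your alternative formulation --- asking for $\pi\in\mathcal D_{\ell-1}(v_1)$ and $\pi+c(v)\in\mathcal D_{\ell}(v_2)$ --- is a \emph{different} intersection condition (between sets at different levels and shifted by the apex colour), and you give no argument bounding it. So the recursion is unproven, and with it the exponential lower bound on $|\mathcal D_\ell|$.

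The paper's proof supplies exactly the missing idea, and it is not a profile-counting recursion. They first run a Ramsey-type argument on the tree: in any $d$-colouring of $T_h$ one can find an ``essentially monochromatic'' subtree $U$ of effective depth $h/d$, meaning all branching vertices and all leaves of $U$ carry the same colour (their Lemma~\ref{lemma:ramsey}, proved by a simple induction on $\sum a_i$). Inside $U$ there are at least $2^{h/d}$ root-to-leaf paths but at most $h^d$ possible colour multisets, so for $d<\sqrt{h/\log_2 h}$ two such paths share a profile. Now the apex (their lowest common vertex) and both leaves are \emph{automatically} the same colour, so removing one leaf and keeping the apex yields two halves with identical profiles --- the parity/apex obstruction you identified is dissolved by the monochromaticity of $U$. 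That Ramsey step is the key device your proposal is missing.
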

		 It follows that the anagram-chromatic number of planar graphs is also unbounded, but it is still interesting to determine how quickly it increases with the number of vertices. 
We observe that in dealing with a family of graphs which 
admits small seperators (such as $H$-minor-free graphs), this fact can be used to
bound  $\piper(G)$ from above.

		 \begin{restatable}{PROP}{planar}		\label{prop:planar}				
			 Let $h \geq 1$ be an integer, and let $H$ be a graph on $h$ vertices. Any $n$-vertex graph $G$ with no $H$-minor satisfies $\piper(G) \leq 10h^{3/2} n ^{1/2}$. 
    	\end{restatable}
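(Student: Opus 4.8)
The plan is to use the separator theorem for $H$-minor-free graphs together with a recursive coloring scheme, and to exploit a crucial structural observation: in any anagram $v_1\dots v_{2n}$, the two halves $\{v_1,\dots,v_n\}$ and $\{v_{n+1},\dots,v_n\}$ must be \emph{disjoint}, so any anagram contains at least two vertices that receive the same color, and moreover the midpoint edge $v_nv_{n+1}$ can be used to ``anchor'' the anagram. First I would recall that by the Alon--Seymour--Thomas separator theorem, every $n$-vertex $H$-minor-free graph (with $|V(H)|=h$) has a separator of size at most $c h^{3/2}\sqrt{n}$ whose removal splits $G$ into components each of size at most $2n/3$; iterating this, one obtains a hierarchical decomposition. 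The idea is to give every vertex in the top-level separator its own private color, then recurse on the pieces with disjoint color palettes, and argue that a modest total number of colors suffices.

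The key step is the following claim: if we color $G$ so that (i) each separator vertex at each level of the recursion gets a unique color used nowhere else, and (ii) within each leaf piece (of bounded size) all vertices get distinct colors, then the coloring is anagram-free. To see this, suppose $P=v_1\dots v_{2n}$ is an anagram. Consider the smallest recursive piece $G'$ that contains all of $P$; then $P$ must meet the separator $S'$ that was removed to decompose $G'$, say at some vertex $v_i$ with color $a$. Since $a$ appears exactly once in the whole graph, $a$ appears exactly once in $P$, hence in exactly one of the two halves of the anagram, contradicting the requirement that both halves have the same number of occurrences of $a$ --- \emph{unless} $v_i$ is the unique separator vertex on $P$ and $i=n$ or $i=n+1$ so that it lies on the boundary. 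Actually, a path through a separator can enter and leave, so it may hit $S'$ in several vertices; the right statement is that $P$ must use the separator to get from one piece to another, and one needs a counting argument on the multiset of separator-colors in the two halves. I would phrase it as: let $a$ be a separator color appearing on $P$; then $a$ appears the same number of times in $v_1\dots v_n$ as in $v_{n+1}\dots v_{2n}$, but since each separator color is used at most once in $G$, this common number is $0$ --- so $P$ uses no separator vertex at all, so $P$ lies entirely inside one leaf piece, where all colors are distinct and an anagram of length $\ge 2$ is impossible.

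It remains to bound the number of colors. Let $N(n)$ be the number of colors this scheme uses for an $n$-vertex $H$-minor-free graph. We have $N(n) \le c h^{3/2}\sqrt{n} + N(2n/3)$ with $N(n_0)\le n_0$ for a suitable constant-size base case, since the separator palettes at different recursion branches are disjoint but a branch of size $m$ only contributes its own separator of size $ch^{3/2}\sqrt m$. Unrolling the recursion, $N(n) \le c h^{3/2}\sqrt n \sum_{j\ge 0}(2/3)^{j/2} + O(1) = O(h^{3/2}\sqrt n)$, since the geometric series $\sum (2/3)^{j/2}$ converges to a constant (roughly $1/(1-\sqrt{2/3}) \le 6$). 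Tracking the constants carefully --- using the sharpest available bound on the separator size and the fact that the base-case pieces can be taken quite small --- yields the stated bound $\piper(G)\le 10 h^{3/2} n^{1/2}$. The main obstacle, and the part requiring the most care, is the verification of the claim that the recursive separator coloring is genuinely anagram-free: one must be sure that a path cannot ``sneak'' across a separator using separator vertices in a balanced way, and the clean resolution is precisely that each separator color is globally unique, which forces its count in each anagram half to be zero and hence confines any putative anagram to a single leaf piece.
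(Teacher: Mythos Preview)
Your overall strategy --- separator theorem plus recursion, with separator vertices receiving individual colors --- is exactly the paper's approach. However, there is a genuine inconsistency between your anagram-freeness argument and your color count.

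Your anagram-freeness argument hinges on the claim that ``each separator color is globally unique'', i.e.\ separator palettes across \emph{all} branches of the recursion are pairwise disjoint. But your recursion $N(n)\le c h^{3/2}\sqrt n + N(2n/3)$ is only valid when sibling pieces \emph{share} their palette: with two pieces of sizes $n_1,n_2\le 2n/3$, the correct bound is $N(n)\le c h^{3/2}\sqrt n + \max\{N(n_1),N(n_2)\}$ if palettes are shared, whereas with disjoint palettes it becomes $N(n)\le c h^{3/2}\sqrt n + N(n_1)+N(n_2)$. The latter recursion does \emph{not} solve to $O(\sqrt n)$: at depth $j$ there may be up to $2^j$ pieces whose total size is still about $n$, and summing $\sqrt{m_i}$ over all of them can be as large as $\sqrt{2^j n}$, so the total separator cost blows up to $\Theta(n)$ over the $O(\log n)$ levels. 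Your sentence ``since the separator palettes at different recursion branches are disjoint but a branch of size $m$ only contributes its own separator'' conflates a single root-to-leaf branch with the whole recursion tree.

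The clean fix (which is what the paper does) is to reuse the \emph{same} palette for the two pieces $A_1,A_2$ and argue by induction rather than by unrolling: if a path avoids the top-level separator $S$ it lies entirely in one $A_i$ (this is exactly what ``separator'' means) and is anagram-free by the inductive hypothesis; if it meets $S$, some separator color appears on it, and that color is unique \emph{in $G$} (not merely in one branch), so the two halves cannot match. With shared palettes the recursion $N(n)\le h^{3/2}\sqrt n + N(2n/3)$ is now legitimate, and $10\sqrt{2/3}+1\le 10$ gives the constant. Your ``smallest piece $G'$ containing $P$'' argument can also be salvaged, but only after noting that the relevant separator colors are unique \emph{within $G'$}, not globally --- and that already requires sibling pieces to share palettes so that a separator color at level $j$ is not reused anywhere inside the piece it splits.
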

	
	In this paper we are particularly interested in  anagram-free colorings of  graphs of bounded degree. We show that, surprisingly,  there are  graphs of bounded degree
	such that to avoid anagrams  we essentially need to give every vertex a separate color. 
We show this by considering the \emph{random regular graph} $\gnd$, which is chosen uniformly at random from all $n$-vertex $d$-regular graphs. Here we write $\gnd$ for the sampled graph as well as the underlying probability space, and we study $\gnd$ for a constant $d$ and $n \rightarrow \infty$. We say that an event in this space holds \emph{with high probability} (whp) if its probability tends to $1$ as $n$ tends to infinity over the values of $n$ for which $nd$ is even (so that $\gnd$ is non-empty). Then our main result can be stated as follows.

	    \begin{THM} \label{thm:almostn}
            There exists a constant $C$ such that for sufficiently large $d$, with high probability, the random regular graph $G_{n,d}$ satisfies $$\left(1- \frac{C \log d}{d} \right)n \leq \pi_{per}(G_{n,d}) \leq \left(1- \frac{ \log d}{d} \right)n.$$
        \end{THM}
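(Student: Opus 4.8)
The plan is to prove the two inequalities separately; the upper bound is a short trick, while the lower bound carries all the weight.

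\textbf{Upper bound.} The mechanism is: if a colouring gives some vertex $v$ a colour used nowhere else, then $v$ lies in no anagram, since in an anagram $v_1\dots v_{2t}$ the two halves have the same colour multiset, so a colour appearing once in one half appears once in the other --- impossible for a colour present only at $v$, as a path meets $v$ at most once. Hence I would take a maximum independent set $I$ of $\gnd$, colour all of $I$ with one new colour, and give every vertex of $V(\gnd)\setminus I$ a private colour; every anagram then lives inside the edgeless graph $G[I]$, which has no path on two vertices, so the colouring is anagram-free. It uses $n-\card{I}+1$ colours, and since whp $\alpha(\gnd)\ge(2-o_d(1))\tfrac{\log d}{d}n\ge\tfrac{\log d}{d}n+1$ (a standard first/second-moment estimate for random regular graphs), this yields $\pi_{per}(\gnd)\le n-\alpha(\gnd)+1\le(1-\tfrac{\log d}{d})n$.

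\textbf{Lower bound: reduction.} Fix an anagram-free colouring $c$ of $\gnd$ with $k$ colours, put $s:=n-k$, and let $R$ be the set of vertices whose colour is not unique; since a non-singleton colour class of size $n_i$ contributes $n_i-1\ge n_i/2$ to $s$, we have $s<\card{R}\le 2s$. By the mechanism above every anagram of $c$ lies inside $G[R]$, so $c|_R$ is an anagram-free colouring of $G[R]$. We want $s\le C\tfrac{\log d}{d}n$; suppose not, so $\card{R}>C\tfrac{\log d}{d}n$.

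\textbf{Lower bound: forcing an anagram.} I would use copies of $P_4$ as gadgets. A connected graph with no path on four vertices is a star or a triangle; in $\gnd$ a star has $\le d+1$ vertices, and a disjoint union of induced stars and triangles has independence number at least a third of its order, hence $\le 3\alpha(\gnd)=O(\tfrac{\log d}{d}n)$ vertices. So if $\mathcal M$ is a maximum family of vertex-disjoint $P_4$'s in $G[R]$, the rest of $G[R]$ is $P_4$-free and thus small, forcing $\card{\mathcal M}\ge\tfrac18\card{R}=\Omega(\tfrac{\log d}{d}n)$ once $C$ is a large enough absolute constant. If some member $abcd$ of $\mathcal M$ is itself an anagram (exactly when $c(a)=c(c)$ and $c(b)=c(d)$) we are done, so assume not. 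I would then use the expansion of $\gnd$ --- backed by a first-moment estimate showing $G[R]$ is far denser than a forest --- to string $2r$ of these $P_4$'s into a single path $Q_1Q_2\dots Q_{2r}$ of $\gnd$ with consecutive blocks joined by edges of $\gnd$. Recording each block's colour multiset as a vector $\mu(Q_j)\in\NN^{\mathcal{C}}$, the midpoint of this $8r$-vertex path falls between $Q_r$ and $Q_{r+1}$, so the path is an anagram as soon as $\sum_{j=1}^{r}\mu(Q_j)=\sum_{j=r+1}^{2r}\mu(Q_j)$; a zero-sum/pigeonhole argument over which $P_4$'s to take and in what order should produce such a balanced split, contradicting anagram-freeness.

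\textbf{Main obstacles.} Two steps do the real work. First, the constant $C$ and the scale $\tfrac{\log d}{d}$ come from first-moment estimates: one needs that whp \emph{every} $s$-set with $s\ge C\tfrac{\log d}{d}n$ induces a subgraph dense and well-connected enough for the $P_4$-packing and the threading to succeed. Eigenvalue bounds are useless here (they control only sets of size $\Omega(n/\sqrt d)$), so one estimates directly in the configuration model: the chance that a fixed $s$-set spans at most $2s$ edges is $e^{-\Omega(ds^2/n)}$, which beats $\binom ns\le e^{O(s\log(n/s))}$ exactly once $s\gtrsim\tfrac{\log d}{d}n$ --- which is why $\alpha(\gnd)$ sits at this scale and why both bounds have the same form. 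Second, and more delicate, is the balanced-split step: naive pigeonhole over colour multisets of $r$ blocks costs $r\gtrsim m\log m$, where $m$ is the number of colours used on $R$, and when $c$ has very many size-two colour classes (so $\card{R}$ is close to $2s$ and $m$ close to $s$) this exceeds the $\Omega(\tfrac{\log d}{d}n)$ budget of available $P_4$'s. That regime seems to require either a sharper Erd\H os--Ginzburg--Ziv-type zero-sum lemma, or a choice of $P_4$'s with colours confined to a bounded palette so the zero-sum becomes finite-dimensional, or a direct argument exploiting that each repeated colour occurs exactly twice --- an anagram is then just a path in $G[R]$ entering and leaving each colour-pair it meets exactly once, which one builds as a path through half of a well-chosen family of pairs followed by a path through the matching half. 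Making this run with only a constant-factor loss is the heart of the theorem.
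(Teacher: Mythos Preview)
Your upper bound is exactly the paper's argument (their Proposition~3.1 plus the known independence number of $\gnd$).

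Your lower bound, however, has a genuine gap that you yourself identify but do not close. The $P_4$-threading scheme needs two things simultaneously: (i) the freedom to choose which $2r$ gadgets to use and in what order, and (ii) an edge of $\gnd$ between the last vertex of each chosen gadget and the first vertex of the next, in that order. Even granting strong expansion of $G[R]$, (ii) is a severe constraint on (i): you cannot freely permute and select gadgets after the fact, because the path structure fixes the sequence. More fundamentally, the balancing step fails in the extremal regime where every repeated colour class has size exactly two. Then $R$ carries about $s$ colours, each appearing twice, so the colour-multiset of a union of $r$ gadgets is a vector in $\{0,1,2\}^s$ with $4r$ ones-plus-twice-twos; pigeonhole over such vectors gives nothing unless $r$ is exponential in $s$, and Erd\H os--Ginzburg--Ziv-type lemmas do not help either, since the group is $\mathbb Z^s$ with no useful torsion. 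Your closing sentence (``a path through half of a well-chosen family of pairs followed by a path through the matching half'') is pointing at the right object, but is not an argument: you have not said how to choose the family, how to guarantee a Hamilton path through each half, or why the two halves are joined by an edge.

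The paper sidesteps the zero-sum problem entirely. Rather than balancing after the fact, it \emph{builds} two equi-coloured sets directly: discard one vertex from each odd colour class, iteratively peel off low-degree vertices in pairs of equal colour until what remains, $Z$, has size $\Theta(\tfrac{\log d}{d}n)$, all colours of even multiplicity, and minimum induced degree $\Omega(\log d)$; then pair up $Z$ by colour and randomly split each pair between $V_1$ and $V_2$, using the Local Lemma to keep the minimum degree inside each $V_i$ at $\Omega(\log d)$. Now $V_1$ and $V_2$ have identical colour multisets by construction, and the remaining task is purely structural: show each $G[V_i]$ is Hamiltonian. For this the paper splits $d=d_1+d_2$, uses the edge-distribution of $G_{n,d_1}$ to make each $G_{n,d_1}[V_i]$ a $(|V_i|/4)$-expander (hence rich in P\'osa boosters), and then sprinkles the edges of $G_{n,d_2}$ to hit boosters until a Hamilton cycle appears; a union bound over all possible $V_i$ closes the argument. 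A single $G_1$-edge between $V_1$ and $V_2$ (guaranteed by edge-distribution) stitches the two Hamilton paths into one anagram. The key conceptual difference from your plan is that the colour-balance is arranged \emph{before} any path-building, by the pairing-and-splitting, rather than sought \emph{after} via a zero-sum argument that does not exist at this scale.
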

        
The rest of this paper is organized as follows. We start with some observations on the 
anagram-chromatic number for trees and minor-free graphs.
Then,  we give the proof of Theorem~\ref{thm:almostn}. We conclude the article with 
some open questions and conjectures on anagram-free colorings.   
	    
	    We mostly omit floor and ceiling signs for the sake of clarity. The $\log$ will denote the \mbox{base-$e$} logarithm. We will sometimes use standard $O$-notation for the asymptotic behaviour of the relative order of magnitude of two sequences, depending on a parameter $n \to \infty$.
			
		\section{Specific families of graphs}
        \subsection{Bounds for trees}
            A \emph{binary tree} is a tree in which every vertex has at most two children.
					 Let $T_h$ be a \emph{perfect} binary tree of depth $h$, that is to say that every non-leaf has two children and there are $2^h$ leaves, all at distance $h$ from the root. The root is taken to be at depth $0$, so a tree consisting of one vertex has depth $0$. Coloring each vertex of $T_h$ by its distance from the root shows that $\piper(T_h)\leq h+1$. In the following section, we will argue that actually any $n$-vertex tree can be colored with $2\log n$ colors. Proposition~\ref{prop:trees} asserts the lower bound
            $\piper(T_h) \geq \sqrt{\frac{h}{\log_2 h}}$,
            which will be proven in this section.

					Let $T$ be a vertex-colored binary tree and $U$ its subtree. The \emph{effective} vertices of $U$ are its root (i.e.~the vertex of $U$ of the smallest depth), leaves, and vertices of degree three. The \emph{effective depth of $U$} is set to $h_1$, where $h_1+1$ is the minimum number of effective vertices on any path from the root to a leaf (that is, the depth of the binary tree obtained by contracting all the internal degree-two vertices of~$U$).
					Note that if $U$ has effective depth $h_1$, then it has at least $2^{h_1}$ leaves. We say that $U$ is $\emph{essentially monochromatic}$ if all its effective vertices carry the same color.
					
					We will use a Ramsey-type argument to find a large essentially monochromatic subtree of a given tree. In the statement below $H(a_1, a_2, \dots a_d)$ 
					denotes the minimal number $h$ for which any perfect binary tree $T$ of depth $h$ whose vertices are colored using colors $1,2, \dots,d$ contains an essentially $i$-colored subtree of effective depth $a_i$, for some $i \in [d]$.
					
		\begin{LEMMA} \label{lemma:ramsey}
			$H(a_1, a_2, \dots a_d) \leq a_1 + \dots + a_d $.
		\end{LEMMA}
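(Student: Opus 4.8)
The plan is to argue by a Ramsey-style induction on $N := a_1 + \dots + a_d$. It is convenient to prove the slightly stronger statement that every perfect binary tree $T$ of depth $N$ whose vertices are colored with colors $1, \dots, d$ contains, for some color $i$, an essentially $i$-colored subtree $U$ of effective depth at least $a_i$ whose root has either $0$ or $2$ children in $U$; call such a subtree \emph{proper}. This extra condition costs nothing (a single vertex is proper), but it is exactly what makes the gluing step below go through. For $N = 0$ the tree is a single vertex $r$, all $a_i$ vanish, and $\{r\}$ is a proper essentially $c(r)$-colored subtree of effective depth $0 = a_{c(r)}$.

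For the inductive step, let $r$ be the root of $T$ and $j = c(r)$. If $a_j = 0$ the single vertex $\{r\}$ works. Otherwise $a_j \ge 1$; put $a_j' := a_j - 1$ and $a_i' := a_i$ for $i \ne j$, so that $a_1' + \dots + a_d' = N - 1$, and apply the inductive hypothesis to the two perfect binary trees of depth $N - 1$ rooted at the children $r_1, r_2$ of $r$. This yields colors $c_1, c_2$ and proper essentially $c_\ell$-colored subtrees $U_\ell \subseteq T$ of effective depth $\ge a'_{c_\ell}$, for $\ell = 1, 2$. If $c_\ell \ne j$ for some $\ell$, then $a'_{c_\ell} = a_{c_\ell}$ and $U_\ell$ is already the subtree we want. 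Otherwise $c_1 = c_2 = j$, and both $U_1, U_2$ are proper, essentially $j$-colored, of effective depth $\ge a_j - 1$. Let $U$ be the subtree of $T$ made up of $r$, the path in $T$ from $r$ to the root of $U_1$, the path from $r$ to the root of $U_2$, and $U_1, U_2$ themselves. Its root $r$ has color $j$ and two children in $U$; the interior vertices of the two added paths have degree two in $U$ and are therefore not effective, while every other vertex retains its status, so the effective vertices of $U$ are $r$ together with those of $U_1$ and $U_2$ --- all colored $j$. Hence $U$ is essentially $j$-colored and proper. Moreover, because each $U_\ell$ is proper, the root of $U_\ell$ becomes a leaf of $U$ or a branch vertex of $U$, and in either case is effective in $U$; consequently every root-to-leaf path of $U$ meets $r$ and then at least $(a_j - 1) + 1$ effective vertices inside $U_1$ or $U_2$, so $U$ has effective depth at least $a_j$. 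This completes the induction, and therefore $H(a_1, \dots, a_d) \le a_1 + \dots + a_d$.

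The only step that needs genuine care --- the one I would view as the heart of the argument --- is the gluing: carrying the word \emph{proper} through the induction is precisely what prevents the root of $U_\ell$ from being a degree-one vertex there, since such a root would become a non-effective degree-two vertex after being hung beneath $r$, costing an effective level and breaking the depth bound. Everything else is bookkeeping. (Should one want effective depth exactly $a_i$ rather than at least $a_i$, it suffices to truncate the contracted binary tree of $U$ at depth $a_i$, which leaves the subtree proper and essentially monochromatic.)
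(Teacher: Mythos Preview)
Your proof is correct and follows essentially the same Ramsey-type induction on $\sum a_i$ as the paper, including the same case split on the root's color and the same gluing of the two subtrees through the root. Your explicit \emph{proper} condition on the root (0 or 2 children) is a genuine improvement in rigor: the paper's sentence ``those two subtrees, together with the root $v$, form an essentially $1$-colored subtree'' glosses over exactly the issue you flag, namely that a degree-one root of $U_\ell$ would become non-effective after gluing and cost an effective level.
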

		
		\begin{proof}
			We use induction on $\sum_{i=1}^d a_i$. The base case is $a_1 = \dots = a_d = 0$, for which the claim clearly holds.
			
			Let $T$ be a perfect binary tree of depth $ a_1 + \dots + a_d $. Suppose that its root $v$ has the color 1, and call its children $v_L$ and $v_R$. Consider the subtrees $T_L$ and $T_R$ of depth at least $a_1 + \dots + a_d - 1$ rooted at $v_L$ and $v_R$ respectively. 
If for some $i \geq 2$, $T_L$ contains an essentially $i$-colored subtree of effective depth $a_i$, we are done. The same holds for $T_R$. Otherwise, using the induction hypothesis, $T_L$ and $T_R$ contain  essentially $1$-colored subtrees of effective depth $a_1 -1$. Those two subtrees, together with the root $v$, form an essentially $1$-colored subtree of $T$, as required.
		\end{proof}

		\begin{proof}[Proof of Proposition~\ref{prop:trees}]
			Let $T_h$ be colored using $d < \sqrt{\frac{h}{\log_2 h}}$ colors. By Lemma~\ref{lemma:ramsey}, it contains an essentially monochromatic subtree $U$ of depth ${h}/{d}$.

			Let $u$ be the root of $U$, and suppose $U$ is essentially red. There are 
at least $2^{h/d}$ paths from $u$ to the leaves, and the coloring of each path is a multiset of order at most $h+1$. On the other hand, there are at most $h^d$ such multisets. Since $h ^d < 2^{h/d}$ for our choice of $d$, there is a multiset which occurs on two different paths, say $P_1$ and $P_2$. Let $v$ be the lowest common vertex of $P_1$ and $P_2$, and let $\ell_1$ and $\ell_2$ be their respective leaves. By construction of $U$, the vertices $v$, $\ell_1$ and $\ell_2$ are red. Hence the segments from $\ell_1$ to $v$, excluding $v$, and from $v$ to $\ell_2$, excluding $\ell_2$, have the same coloring.
    
            We conclude that the given coloring of $T_h$, even restricted to $U$, contains an anagram.
		\end{proof}


                

			\subsection{Graphs with an excluded minor}
				Planar graphs are of special interest when it comes to coloring problems. The Four color theorem is one of the most celebrated results in Graph Theory. Moreover, the question of whether the Thue-chromatic number of planar graphs is finite has attracted a lot of attention and is still open. 
We use separator sets to show that for a large class of minor-free graphs the 
anagram-chromatic number is of order $O \left( \sqrt{n} \right)$. The crucial ingredient 
of our argument is  
the separator theorem, proved by Alon, Seymour and Thomas \cite{seymour}. 
It states that for a given $h$-vertex graph $H$, in any graph $G$ with $n$ vertices and no $H$-minor, one can find a set $S \subset V(G)$ order $|S| \leq h^{\frac 32}n ^{\frac 12}$, whose removal partitions $G$  into disjoint subgraphs each of which has at most $\frac{2n}{3}$ vertices. Such a set $S$ is called a \emph{separator} in $G$. 
				
					Using this theorem, we construct a coloring of any proper minor-closed family of graphs. For convenience of the reader, we restate Proposition~\ref{prop:planar}.
					
					\planar*
					\begin{proof}
						The coloring is inductive -- suppose the claim holds for graphs on at most $n-1$ vertices.
						Let $G$ be as in the statement, and let $S$ be a separating set of vertices in $G$ of order at most $h^{3/2}n ^{1/2}$ given by the Separator Theorem. 
Then $G-S$ consists of two vertex-disjoint subgraphs spanned by $A_1 \subset V(G)$ and $A_2 \subset V(G)$, with $|A_i |\leq \frac{2n}{3}$.
						
						The induced subgraphs $G[A_i]$ do not contain $H$ as a minor, so by the inductive hypothesis, we can color them using $k=10h^{3/2}\sqrt{{2n}/{3}}$ colors $a_1, a_2, \dots, a_k$. Note that the two subgraphs receive colors from the same set. This coloring guarantees that any path containing only vertices from $A_1$ or $A_2$ is anagram-free. Furthermore, we assign to each vertex $v_i \in S$ a separate color $b_i$, making any path passing through $S$ anagram-free.  Hence the coloring is indeed anagram free. As intended, the number of colors used is at most
						$$h^{3/2}n^{1/2} \left(10\sqrt{\frac{2}{3}}  +1 \right)\leq 10h^{3/2}n^{1/2}.$$
					\end{proof}
					
				Since 	planar graphs are characterized as graphs containing neither $K_5$ nor $K_{3, 3}$ as a minor, we arrive at the following consequence of the above result (note that the constant 150
				can be replaced by 19 if we use the fact that each planar graph has a separator of 
				order $1.84\sqrt{n}$). 
				\begin{COR}
					Let $G$ be an $n$-vertex planar graph. Then $\piper(G) \leq 150\sqrt{n}$.
				\end{COR}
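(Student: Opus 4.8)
The plan is to read the corollary straight off Proposition~\ref{prop:planar}, using the characterisation recalled just above: planar graphs are exactly the graphs having neither a $K_5$- nor a $K_{3,3}$-minor, and in particular every planar graph excludes $K_5$ as a minor. Thus, given an $n$-vertex planar graph $G$, I would apply Proposition~\ref{prop:planar} with $H = K_5$, that is $h = 5$, to obtain
\[
\piper(G) \le 10\cdot 5^{3/2}\, n^{1/2} = 50\sqrt{5}\,\sqrt n < 150\sqrt n .
\]
(If one prefers to exclude $K_{3,3}$, then $h = 6$ gives $\piper(G) \le 10\cdot 6^{3/2}\sqrt n = 60\sqrt{6}\,\sqrt n < 150\sqrt n$; either choice yields the claimed bound.) So the corollary as stated has essentially no content beyond Proposition~\ref{prop:planar}, and I expect no obstacle here.

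For the improved constant $19$ promised in the parenthetical, I would not invoke Proposition~\ref{prop:planar} as a black box but instead rerun its inductive proof, replacing the generic separator estimate $|S| \le h^{3/2} n^{1/2}$ by the planar-specific bound $|S| \le 1.84\,\sqrt n$ --- a known sharpening of the Lipton--Tarjan planar separator theorem, with the two parts of $G - S$ still of size at most $2n/3$. Everything else in that argument is unchanged: give each of the at most $1.84\sqrt n$ separator vertices a private colour, and recolour the two pieces $G[A_1]$ and $G[A_2]$ from a common palette of size $10\cdot 1.84\cdot\sqrt{2n/3}$ furnished by the induction hypothesis. The same bookkeeping as in Proposition~\ref{prop:planar} --- namely $10\cdot 1.84\cdot\sqrt{2/3} + 1.84 \le 10\cdot 1.84$ --- closes the induction with $10\cdot 1.84 < 19$ colours per $\sqrt n$, giving $\piper(G) \le 19\sqrt n$.

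The only points needing care are routine: one should check that the base of the recursion (a single vertex, coloured with one colour) is comfortably covered by the constant, and recall that subgraphs of planar graphs are planar so that the induction hypothesis does apply to $G[A_1]$ and $G[A_2]$. There is no substantive difficulty; the main decision is simply which excluded minor --- or which separator bound --- to feed into the machinery already built.
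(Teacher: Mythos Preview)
Your proposal is correct and matches the paper's approach exactly: the paper derives the corollary directly from Proposition~\ref{prop:planar} via the excluded-minor characterisation of planar graphs, and notes (without details) that the constant~$19$ comes from rerunning the induction with the $1.84\sqrt{n}$ planar separator bound, just as you outline. There is nothing to add.
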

                
                    In fact, any hereditary family of graphs with small separators can be colored using the argument from Proposition~\ref{prop:planar}. For example, it is easy to see  
that an $n$-vertex forest $F$ 
contains a single vertex which separates it into several forests on at most $n/2$ vertices. The same inductive argument implies $\piper(F) \leq \lceil \log_2 n\rceil$. 

As for the lower bound for planar graphs, we only have the following modification of the argument we gave for trees.
                
                \begin{PROP} \label{prop:planar2}
        There is an $n$-vertex planar graph $F_n$  
with $\piper(F_n) \geq \lceil \frac14\log_2 n \rceil$.
                \end{PROP}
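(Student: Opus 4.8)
\emph{Proposal.} The plan is to re-run the multiset pigeonhole from the proof of Proposition~\ref{prop:trees}, but on a planar graph built from a complete binary tree so that the Ramsey step (Lemma~\ref{lemma:ramsey}) is no longer needed --- that step is precisely what costs the current bound a factor of about $\sqrt{\log h}$.

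\emph{Construction.} For $n=2^{h+1}-1$, let $F_n$ be the perfect binary tree $T_h$ together with an extra edge between the two children of every internal vertex, so that each internal vertex forms a triangle with its children. This graph is planar: in the standard plane drawing of $T_h$ each sibling edge can be routed inside the face directly below the common parent, and distinct sibling pairs use disjoint faces. For an arbitrary $n$, take the largest $h$ with $2^{h+1}-1\le n$ and attach the remaining $n-(2^{h+1}-1)$ vertices as pendant leaves at the root; $\piper$ is monotone under taking subgraphs (a path of a subgraph is a path of the whole graph, carrying the same coloring), so this can only increase $\piper(F_n)$, and the estimate below has room to spare. The range $\lceil\tfrac14\log_2 n\rceil\le 3$, i.e.\ $n\le 2^{12}$, is covered separately by $\piper(F_n)\ge\chi(F_n)\ge 3$, which holds because $F_n$ contains a triangle and because an anagram-free coloring is in particular proper (a monochromatic edge is a two-vertex anagram).

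\emph{Main argument.} Suppose $F_n$ has an anagram-free coloring with $d$ colors, where $d\le h/4$. Associate to each of the $2^h$ leaves $\ell$ of $T_h$ the multiset $M_\ell$ of the $h+1$ colors on the root-to-$\ell$ path of $T_h$. The number of multisets of size $h+1$ from $d$ colors is $\binom{h+d}{d-1}$, which is increasing in $d$ and hence for $d\le h/4$ is at most $\binom{5h/4}{h/4}\le(5e)^{h/4}<16^{h/4}=2^h$ (using $5e<16$). So two distinct leaves $\ell_1,\ell_2$ satisfy $M_{\ell_1}=M_{\ell_2}$. Let $v$ be their lowest common ancestor in $T_h$, and write the two branches of $T_h$ below $v$ as $v,a_1,a_2,\dots,a_r=\ell_1$ and $v,b_1,b_2,\dots,b_r=\ell_2$ (equal length, since $T_h$ is perfect). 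Cancelling the common root-to-$v$ portion, and then the extra copy of the color $c(v)$, from $M_{\ell_1}=M_{\ell_2}$ shows that the color sequences $c(a_1)\cdots c(a_r)$ and $c(b_1)\cdots c(b_r)$ are permutations of each other. The sibling edge $a_1b_1$ of $F_n$ now lets us form the path
$$a_r,\,a_{r-1},\,\dots,\,a_1,\,b_1,\,b_2,\,\dots,\,b_r$$
on $2r$ vertices, whose two halves of $r$ vertices are colored by these two permutation-equivalent sequences. This is an anagram, a contradiction. Hence every anagram-free coloring of $F_n$ uses more than $h/4\ge\tfrac14\log_2 n$ colors, i.e.\ $\piper(F_n)\ge\lceil\tfrac14\log_2 n\rceil$.

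\emph{Where the work is.} The only genuine idea is the construction. In the bare tree $T_h$ the unique $\ell_1$--$\ell_2$ path runs through $v$ and has odd length $2r+1$, so it cannot be an anagram; deleting an endpoint makes the two halves differ by a single vertex, and forcing them to be color-equivalent would demand that $c(v)$ match a leaf color --- exactly the constraint that the essentially-monochromatic-subtree argument of Proposition~\ref{prop:trees} has to arrange, at the price of dividing the effective depth by $d$ (and thereby losing the $\sqrt{\log h}$ factor). Adding the sibling edges removes this obstruction: two sibling branches can be spliced together directly, so that a bare coincidence of leaf-multisets already yields an anagram. What remains afterwards --- the binomial estimate and the bookkeeping for $n$ not of the form $2^{h+1}-1$ --- is routine.
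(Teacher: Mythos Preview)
Your construction and argument are exactly the paper's: add sibling edges to a perfect binary tree, pigeonhole the root-to-leaf color multisets, and splice the two matching branches via the sibling edge at their lowest common ancestor. One harmless slip---after cancelling the common root-to-$v$ segment there is no ``extra copy of $c(v)$'' to remove, since that segment already contains $v$---but the conclusion that the multisets $\{c(a_i)\}$ and $\{c(b_i)\}$ coincide is correct and the proof goes through.
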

                
                \begin{proof}
                    Let $F_n$ be a perfect binary tree with $n$ leaves, plus extra edges between any two vertices on the same level having the same parent. Suppose it is colored in $k= \lceil \frac14\log_2 n \rceil$ colors. 
                    The number of shortest paths from the root to the vertices 
corresponding to leaves is $n$, whereas the number of possible colorings of these paths is $\binom{\log_2 n+ k}{k-1}<n$;
hence some two paths have the same coloring. These two paths, minus the shared initial segment,
 can be made into an anagram.
                \end{proof}

        \section{Bounded-degree graphs}
            \subsection{A four-regular graph with a large anagram-chromatic number}

In this section we study the number of colors needed to color a bounded-degree graph on $n$ vertices so as to avoid all anagrams. The trivial
upper bound is $n$, so we will mainly be interested in lower bounds for the
anagram-chromatic number. It is easy to use the Local Lemma to show that for every 
graph $G$ with maximum degree at most two, we have $\piper(G)\le C$ for a suitably
chosen constant. It turns out that there are already  $4$-regular graphs $G$ for which
$\piper(G)$ grows rather quickly with the size of the graph. 
				 				 
	\begin{PROP}
                For infinitely many values of $n$, there exists a 4-regular $n$-vertex graph $H$ with $\piper (H) \geq  \frac{\sqrt{n}}{ \log_2 n}  $. 
            \end{PROP}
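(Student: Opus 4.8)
The plan is to take $H$ to be a ``layered expander''. Fix an integer $m$ and set $n=m^{2}$; partition $V(H)$ into levels $L_{0},L_{1},\dots,L_{m-1}$, each of size $m$, with indices read cyclically ($L_{m}=L_{0}$), and join each $L_{i}$ to $L_{i+1}$ by a simple $2$-regular bipartite graph (a union of two perfect matchings) chosen pseudorandomly, so that every vertex has exactly two neighbours in the level above and two below and the bipartite graphs expand. Then $H$ is $4$-regular. The one structural fact needed is elementary: from every vertex $v\in L_{0}$ there are exactly $2^{\,m-1}$ ``forward'' walks $v=x_{0},x_{1},\dots,x_{m-1}$ with $x_{j}\in L_{j}$; each is a self-avoiding path, since it meets $m$ distinct levels once each, and distinct choices of up-edges give distinct paths. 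Thus $H$ has, from every vertex, an exponential (in $\sqrt n$) family of self-avoiding paths of length $\sqrt n-1$ --- having this many long paths is exactly what the binary tree lacks and is what should push the bound past $\sqrt{\log n}$.

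Suppose now that $H$ is coloured with $d<\sqrt n/\log_{2}n$ colours; I want to produce an anagram. Fix $v\in L_{0}$ and consider its $2^{\,m-1}$ forward paths. The colours along such a path form a multiset of size $m$ over $d$ colours, and there are at most $\binom{m+d-1}{d-1}\le(m+d)^{d}=2^{o(\sqrt n)}$ of these; so by pigeonhole there is a family $\mathcal C$ of at least $2^{\,m-1}/(m+d)^{d}=2^{\,\sqrt n/2-o(\sqrt n)}$ forward paths from $v$, all carrying the same colour multiset $M$.

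The crux is to splice two members of $\mathcal C$ into an anagram. Two distinct forward paths $P,Q\in\mathcal C$ separate at a unique vertex $u\in L_{k-1}$: write $P=v\cdots u\,p_{k}\cdots p_{m-1}$ and $Q=v\cdots u\,q_{k}\cdots q_{m-1}$ with $p_{k}\neq q_{k}$, and suppose they do not meet again after $u$. Then $W=p_{m-1}p_{m-2}\cdots p_{k}\,u\,q_{k}q_{k+1}\cdots q_{m-2}$ is a self-avoiding path on $2(m-k)$ vertices. Since $P$ and $Q$ share the prefix $v\cdots u$ and carry the same multiset $M$, their suffixes $p_{k}\cdots p_{m-1}$ and $q_{k}\cdots q_{m-1}$ carry a common multiset $M'$, and the two halves of $W$ then carry multisets $M'$ and $(M'\setminus\{c(q_{m-1})\})\cup\{c(u)\}$; so $W$ is an anagram precisely when $c(u)=c(q_{m-1})$. (Splicing $Q$ in full and $P$ partially gives the mirror condition $c(u)=c(p_{m-1})$.) Hence it is enough to find $P,Q\in\mathcal C$, not reconverging after their separation vertex $u$, with $c(u)$ equal to the terminal colour of one of them.

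This final step is where I expect all the real work to lie, and where both the exponential size of $\mathcal C$ and the expansion of $H$ are essential. The idea is to pigeonhole repeatedly inside $\mathcal C$: grouping by terminal colour keeps the family of size $2^{\Omega(\sqrt n)}$ with a common terminal colour $\gamma$, and since every pair of its members separates at one of only $n$ vertices, some vertex is the separation point of $2^{\Omega(\sqrt n)}/n$ pairs; one then wants to arrange, using the structure of $M$ together with re-routing of paths through the expander, that this separation vertex can be taken of colour $\gamma$, and to use expansion to guarantee that a positive proportion of the pairs separating at a fixed vertex run internally disjointly to the top level (in an expander, two paths leaving a vertex in different directions reconverge within the remaining at most $m$ levels with probability bounded away from $1$). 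Carrying ``separates at a $\gamma$-coloured vertex'' and ``stays internally disjoint thereafter'' at the same time is the genuine obstacle; pairs that do reconverge, first at a vertex $z$, can also be pressed into service, splicing the two internally disjoint $u$--$z$ subpaths and using the coincidence $c(u)=c(z)$. Once such a pair is found the spliced path is an anagram, contradicting the colouring, so $\piper(H)\ge\sqrt n/\log_{2}n$; taking $n=m^{2}$ for every integer $m$ yields infinitely many admissible values of $n$.
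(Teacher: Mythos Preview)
Your construction and the first pigeonhole step are fine: from a fixed $v\in L_0$ you do get $2^{\,m-1}$ self-avoiding forward paths, and with $d<\sqrt n/\log_2 n$ colours a family $\mathcal C$ of size $2^{\Omega(\sqrt n)}$ sharing a common colour multiset $M$ survives. Your splice analysis is also correct: gluing two paths separating at $u$ into a path of even length yields an anagram exactly when $c(u)$ equals the colour of the dropped terminal vertex.

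The gap is precisely where you say it is, and it is fatal as written. After pigeonholing on terminal colour $\gamma$ and on the separation vertex, nothing forces that vertex to carry colour $\gamma$: the separation vertex of a pair is determined by the combinatorics of the two paths, not by the colouring, and there is no mechanism in your sketch that links the two. ``Re-routing through the expander'' is not available, because once you re-route a path it need no longer carry the multiset $M$, so it drops out of $\mathcal C$. The fallback you mention for reconverging pairs requires $c(u)=c(z)$, which is equally unjustified. Finally, the disjointness claim (``reconverge with probability bounded away from $1$'') is a statement about generic paths, whereas after several rounds of pigeonholing the surviving paths may be highly correlated and could all funnel through a small set of vertices; expansion of the bipartite pieces does not by itself rule this out.

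The paper sidesteps all of this with a different construction. It takes $k+1$ vertex-disjoint copies of a $3$-regular Hamilton-connected graph on $k$ vertices and adds a perfect matching with exactly one edge between every pair of copies, so $H$ is $4$-regular on $n=k(k+1)$ vertices. The pigeonhole is then applied not to paths but to the $2^{k+1}$ unions $\bigcup_{i\in S}V_i$: with fewer than $\sqrt n/\log_2 n$ colours there are fewer than $2^{k+1}$ possible colour multisets of size $\le n$, so two disjoint index sets $S',T'$ give blocks with identical colouring. Hamilton-connectedness then lets you thread a single path through all of $\bigcup_{i\in S'}V_i$ followed by all of $\bigcup_{i\in T'}V_i$, and this path is an anagram by construction---no colour-matching condition at a splice vertex is ever needed. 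The moral is that pigeonholing on \emph{vertex sets} whose colour profiles you compare directly, rather than on \emph{paths} whose halves you must later rebalance, removes exactly the obstacle you ran into.
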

            
            \begin{proof}
                  Note that for each even $k\geq 4$, there exists a 3-regular $k$-vertex graph $G$ which is Hamilton-connected, which means that any two vertices of $G$ are joined by a Hamilton path. Indeed, it can be easily checked that for any $m \geq 1$, the Cayley graph of $C_2 \times C_{2m+1}$ with canonical generators  is Hamilton-connected. For a self-contained proof, we refer the reader to \cite{cq}. Let $n=(k+1)k$. Take $k+1$ copies of  such $G$ on vertex sets $V_1, V_2, \dots V_{k+1}$ with $|V_i|=k$. Furthermore, take a perfect matching $M$ on $ V_1 \cup  \dots \cup V_{k+1}$ such that there exists exactly one edge between any two $V_i$ and $V_j$, for $i \neq j$. To see that such a matching exists, denote $V_i = \{v_{ij}: j \in [k+1]\setminus \{i \}\}$, and take $M = \{ \{v_{ij}, v_{ji} \}: 1\leq i <j \leq k+1 \}$.
                  
                  Call the resulting graph $H$. $H$ is 4-regular - any vertex has three adjacent edges belonging to its copy of $G$ and one edge belonging to $M$. 
                  Suppose that the vertices of $H$ are colored with $ \left \lfloor \frac{\sqrt{n}}{ \log_2 n}  \right \rfloor $ colors.  Consider the subsets of form $\bigcup_{i \in S}V_i$ for any $S \subset [k+1]$. There are $2^{k+1}$ such subsets. The coloring of each $\bigcup_{i \in S}V_i$ defines a multiset of order at most $n$. Given $ \left \lfloor \frac{\sqrt{n}}{ \log_2 n}  \right \rfloor $ colors, the number of such multisets is at most $n^{ \frac{\sqrt{n}}{ \log_2 n}  } = 2^{\sqrt{n}}< 2^{k+1}$.
    Thus, by the pigeonhole principle, there are two distinct sets $S, T\subset [k+1]$ such that $\bigcup_{i \in S}V_i$ and $\bigcup_{i \in T}V_i$ have the same number of occurrences of each  color.  
    The same holds for sets $S^\prime = S \setminus T$ and $T^\prime = T \setminus S$, which are in addition disjoint. Without loss of generality assume $S^\prime = \{V_1, \dots V_s \}$ and $T^\prime = \{V_{s+1}, \dots V_{2s} \}$. By the choice of $M$, we can find vertices $v_1, u_1, v_2, u_2, \dots v_{2s} , u_{2s}$ such that $v_i, u_i \in V_i$ for $i \in [2s]$,and $u_i v_{i+1}$ are edges in $M$ for $i \in [2s-1]$. Moreover, we can find a Hamilton path in each $H[V_i]$ between $u_i$ and $v_i$, using Hamilton-connectedness of $G$. Concatenating these $2s$ paths gives us a path in $H$ which traverses $V_1 \cup V_2 \dots \cup V_{2s}$ in order. This path forms an anagram in $H$, so $\piper (H) > \left \lfloor \frac{\sqrt{n}}{ \log_2 n}  \right \rfloor$.
            \end{proof}

        \subsection{Random regular graphs}
            Let us start with a simple observation which slightly improves  the trivial 
            upper bound $n$ for the anagram-chromatic number of a graph. 

            \begin{PROP} Let $G$ be an $n$-vertex graph with an independent set of order $m$.
            Then $\piper(G) \leq n-m+1$. \label{prop:indep}
            \end{PROP}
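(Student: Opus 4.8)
The plan is to reserve a single color for the independent set and to give every other vertex its own private color. Concretely, let $I \subs V(G)$ be an independent set with $\card{I} = m$. Assign one fixed color, say colour $0$, to every vertex of $I$, and assign the $n-m$ pairwise distinct colors $1,2,\dots,n-m$ to the remaining vertices, one color per vertex. This uses exactly $n-m+1$ colors, so it suffices to verify that the coloring is anagram-free.

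Suppose towards a contradiction that some path $v_1 v_2 \cdots v_{2k}$ is an anagram, so its segments $v_1 \cdots v_k$ and $v_{k+1} \cdots v_{2k}$ have the same coloring. The key observation is that a vertex outside $I$ is the unique vertex of $G$ carrying its color. Hence if some $v_j$ with $j \le k$ were outside $I$, then in order for the second segment to match the number of occurrences of the color $c(v_j)$, the vertex $v_j$ itself would have to appear among $v_{k+1},\dots,v_{2k}$; but a path visits each vertex at most once, so this is impossible. Therefore $v_1,\dots,v_k \in I$, and by the same argument applied to the other half, $v_{k+1},\dots,v_{2k}\in I$ as well.

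It then remains to contradict the independence of $I$. If $k \ge 2$, the edge $v_1 v_2$ of the path has both endpoints in $I$, which is impossible; and if $k=1$, the anagram is simply the edge $v_1 v_2$, again with both endpoints in $I$. Either way we reach a contradiction, so no anagram exists and $\piper(G) \le n-m+1$. I expect no genuine obstacle here: the argument is entirely routine, and the only point that demands a moment's care is the base case $k=1$, where the color-uniqueness observation must be invoked for \emph{both} halves of the putative anagram rather than just one.
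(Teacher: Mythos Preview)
Your proof is correct and follows exactly the same approach as the paper: color the independent set with a single color and give each remaining vertex its own color, then observe that any path (hence any putative anagram) must contain a vertex with a unique color. The paper's version is terser---it just notes that any path contains a vertex outside the independent set and hence cannot be an anagram---but your more explicit contradiction argument unpacks precisely the same idea.
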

            \begin{proof}
                Let $S$ be an independent set inside $G$ of order $m$. Give each vertex of $S$ the same color, and each vertex of $V(G) \setminus S$ its own color. Any path in $G$ contains at least one vertex of $V(G) \setminus S$, so it cannot contain an anagram. This means that our coloring is indeed anagram-free.
            \end{proof}
            
The above bound is essentially optimal for the random regular graph $G_{n,d}$. To recapitulate, Theorem~\ref{thm:almostn} states that for sufficiently large $d$, with high probability, $\gnd$ satisfies $$\left(1- \frac{2 \cdot 10^5\log d}{d} \right)n  \leq \piper(G_{n,d}) \leq \left(1- \frac{ \log d}{d} \right)n.$$

				The upper bound is an immediate consequence of Proposition~\ref{prop:indep}, and the 
				fact that with high probability, $\gnd$ contains an independent set of order asymptotic to $\frac{2 \log d}{d}n > \frac{ \log d}{d}n  $ (see, for instance, Frieze and {\L}uczak~\cite{frieze}). We will now outline the proof of the lower bound on $\piper(\gnd)$, which comprises the remainder of the section. 				
				Instead of studying  the random $d$-regular graph $\gnd$, we will consider  the union of 
two random graphs $G_{n, d_1}$ and $ G_{n, d_2}$ with  $d = d_1 + d_2$. The asymptotic 
properties of $\gnd$ are contiguous with such a model (see Lemma 9.24 in \cite{JLR}).
Let $G_1 = \gndo$,  $G_2 = \gndt$ and $c$ be a given vertex-coloring of $G = G_1 \cup G_2$. The first step is to find two vertex subsets $V_1$ and $V_2$ with the same coloring such that $G_1[V_1]$ and $G_1[V_2]$ have good expansion properties. Then we use the edges of $G_2$ to extend paths on $V_1$ and $V_2$, eventually building Hamilton cycles $C_1$ in $G [V_1]$ and $C_2$ in $G[V_2]$. Finally, we can find an edge $v_1 v_2 \in G$ with $v_i \in V_i$ and use it to build a single path $S$ which traverses first the vertices of $C_1$ and then the vertices of $C_2$. The segments $S[V_1]$ and $S[V_2]$ give an anagram in $c$.


				Before proceeding, let us introduce some notation.						
				For a graph $G$ and $v \in V(G)$, we denote the neighborhood of $v$ in $G$ by $N_G(v)$. For a vertex set $U \subset V(G)$, $N_G(U) =\bigcup_{v \in U} N_G(v) \setminus U$. The graph induced on $U$ is $G[U]$, and its edge set is denoted by $E_G(U) = E(G[U])$. For disjoint sets $U$ and $T$, $E_G(U, T)$ is the set of edges with one endpoint in $U$ and one in $T$. Finally, the corresponding counts are $e_G(U) = |E_G(U)|$ and $e_G(U, V) = |E_G(U, V)|$.
				We denote the uniform probability measure on the space of random regular graphs $G_{n, d}$ by $\mathbb{P}$, suppressing the indices. All the inequalities below are supposed to hold only for $n$ 
large enough.

						\subsubsection{Edge distribution in the configuration model}
							In analysing $\gnd$, we pass to the \emph{configuration} model of random regular graphs. For $nd$ even, we take a set of $nd$ points partitioned into $n$ cells $v_1,\, v_2,\, \dots v_n$, each cell containing $d$ points. A perfect matching  $P$ on $[nd]$ induces a multigraph $\mathcal{M}(P)$ in which the cells are regarded as vertices and pairs in $P$ as edges. 
							For a fixed degree $d$ and $P$ chosen uniformly from the set of perfect matchings $\pnd$, the probability that $\mathcal{M}(P)$ is a simple graph is bounded away from zero, and each simple graph occurs with equal probability. Therefore, if an event holds whp in $\mathcal{M}(P)$, then it holds whp even when we condition on the event that $\mathcal{M}(P)$ is a simple graph, and therefore it holds whp in $\gnd$ (for a formal description of the configuration model
and its basic properties, see, for instance, Chapter 9 of~\cite{JLR}). 
				
				We use the configuration model to get a bound on the edge distribution in $\gnd$ analogous to the Erd\H{o}s-Renyi model. The uniform probability measure on $\pnd$ is denoted by $\mathbb{P}_\mathcal{P}$. Both indices $n$ and $d$ are kept so that each perfect matching $P$ corresponds to a unique $d$-regular multigraph $\mofp$.
				
				\begin{LEMMA} \label{lemma:edges}
				    Let $V_1 \subset [n]$, and let $B$ be a set of pairs of vertices from  $V_1$. 
Let $E$ be another set of pairs of vertices from $[n]$ with $|E| \leq \min \left\{ \frac {1} {4 |V_1|} |B| d,\, \frac{nd}{20} \right \}$. For a fixed positive integer $d$ and $P \in \pnd$ chosen uniformly at random,
				    $$\prp{\mofp \supset E \text{ and } \mofp \cap B = \emptyset} \leq
				    \left(\frac{2d}{n} \right)^{|E|} e^{-\frac{2|B|d}{5n}}.$$
				\end{LEMMA}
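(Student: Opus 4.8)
The plan is to separate the two requirements on $\mofp$, first realising the edges of $E$ by an explicit union bound and then bounding, conditionally on such a realisation, the probability that $B$ is avoided. We may assume $E\cap B=\emptyset$ and that no vertex meets more than $d$ edges of $E$, as otherwise the event in question is empty. Call a set $F$ of $|E|$ pairwise disjoint pairs of points of $[nd]$ a \emph{realisation} of $E$ if, for each $e=\{u,w\}\in E$, exactly one pair of $F$ consists of a point in cell $u$ and a point in cell $w$; there are at most $d^{2|E|}$ realisations and $\{\mofp\supset E\}=\bigcup_F\{F\subseteq P\}$. Since a fixed set of $|E|$ disjoint pairs lies in the uniform perfect matching $P$ with probability $\prod_{i=0}^{|E|-1}(nd-1-2i)^{-1}\le (nd-2|E|)^{-|E|}$, the bound $|E|\le nd/20$ gives
\begin{equation*}
\sum_F \prp{F\subseteq P}\ \le\ \Bigl(\frac{d^2}{\,nd-2|E|\,}\Bigr)^{|E|}\ \le\ \Bigl(\frac{10d}{9n}\Bigr)^{|E|}\ \le\ \Bigl(\frac{2d}{n}\Bigr)^{|E|}.
\end{equation*}
Writing $\prp{\mofp\supset E,\ \mofp\cap B=\emptyset}\le\sum_F\prp{F\subseteq P}\,\prp{\mofp\cap B=\emptyset\mid F\subseteq P}$, it therefore suffices to prove that for \emph{every} realisation $F$ of $E$ one has $\prp{\mofp\cap B=\emptyset\mid F\subseteq P}\le e^{-2|B|d/(5n)}$.

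Fix such an $F$. Conditioned on $F\subseteq P$, the $m:=nd-2|E|\ge\frac9{10}nd$ points not covered by $F$ carry a uniform random perfect matching, and for $v\in V_1$ exactly $c_v:=d-\deg_E(v)$ points of cell $v$ remain ``free''. Since $E\cap B=\emptyset$, an edge $\{u,v\}\in B$ belongs to $\mofp$ if and only if the conditional matching joins a free point of cell $u$ to a free point of cell $v$. Here is where the hypothesis $|E|\le\frac{|B|d}{4|V_1|}$ enters: dropping nonnegative cross terms and using $\deg_B(v)\le|V_1|-1$ and $\sum_{v}\deg_E(v)\le 2|E|$,
\begin{equation*}
\sum_{\{u,v\}\in B}c_uc_v\ \ge\ |B|d^2-d\sum_{v\in V_1}\deg_B(v)\deg_E(v)\ \ge\ |B|d^2-2d|V_1||E|\ \ge\ \tfrac12|B|d^2 .
\end{equation*}

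To bound the conditional probability of avoiding $B$ I would expose the conditional matching one pair at a time, repeatedly selecting an unmatched free point of a $V_1$-cell (in an order chosen below) and revealing its partner. If at some step the selected point lies in cell $v$ and $g$ free points of cells $u$ with $\{u,v\}\in B$ are still unmatched, then, conditioned on the past and on $B$ having been avoided so far, the revealed partner creates an edge of $B$ with probability at least $g/m$; hence
\begin{equation*}
\prp{\mofp\cap B=\emptyset\mid F\subseteq P}\ \le\ \prod_t\Bigl(1-\frac{g_t}{m}\Bigr)\ \le\ \exp\Bigl(-\frac1m\sum_t g_t\Bigr),
\end{equation*}
the product running over all exposure steps (with $g_t$ replaced by a deterministic lower bound valid on the event of survival). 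On that event no free point of a cell is ever matched into a $B$-neighbouring cell, so a counting argument — processing the cells of $V_1$ in an order for which, when a cell is processed, the free points of its $B$-neighbours are still largely unconsumed, and accounting for the points used up prematurely — shows that $\sum_t g_t$ is at least a constant fraction of $\sum_{\{u,v\}\in B}c_uc_v$. Together with $m\le nd$ and the lower bound $\sum c_uc_v\ge\frac12|B|d^2$ this yields $\prp{\mofp\cap B=\emptyset\mid F\subseteq P}\le e^{-2|B|d/(5n)}$, as needed.

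The decisive step is this last lower bound on $\sum_t g_t$: a cell's free points are eroded both by $E$ (captured in $c_v<d$) and, during the exposure, by the partners of points of earlier-processed cells, and one must order the exposure and book-keep these losses so that the ``collision budget'' $\sum_t g_t$ stays within a constant factor of $\sum_{\{u,v\}\in B}c_uc_v$ — the slack between $\frac12$ and $\frac25$ in the statement is precisely what makes this affordable.
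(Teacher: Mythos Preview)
Your treatment of the first factor is fine and matches the paper: you sum over realisations $F$ of $E$, bound $\prp{F\subseteq P}$ by $(nd-2|E|)^{-|E|}$, and absorb the $d^{2|E|}$ realisations to get $(2d/n)^{|E|}$. The paper does the same via Stirling (its Claim~1).

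The second factor is where you depart from the paper, and there is a real gap. The paper does \emph{not} use sequential exposure. Conditioned on $F\subseteq P$, the remaining points carry a uniform perfect matching on $m$ points that must avoid a graph $H$ with $e(H)=\sum_{\{u,v\}\in B}c_uc_v$ edges; the paper bounds the number of $H$-avoiding perfect matchings using the Alon--Friedland corollary of Br\`egman's permanent inequality, obtaining $\Pr[\text{avoid}]\le e^{-\frac{8}{9}e(H)/m}$, which combined with $e(H)\ge\tfrac12|B|d^2$ and $m\le nd$ gives $e^{-2|B|d/(5n)}$.

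Your exposure argument, by contrast, needs a \emph{deterministic} lower bound on $\sum_t g_t$ valid for every history consistent with survival --- equivalently, for every $H$-avoiding perfect matching $M$ on the free points. That lower bound is not available with the constant you need. Take $E=\emptyset$ and $B$ a perfect matching $\{\{u_i,v_i\}:i\in[k]\}$ on $V_1$; the cyclic matching $M$ sending all of cell $u_i$ into cell $v_{i+1}$ avoids $B$, and for this $M$ one checks that under \emph{any} exposure order $\sum_t g_t\le\tfrac12\sum c_uc_v$: processing $u_i$ wipes out $v_{i+1}$, so at most half of the cells $u_j$ can ever see an intact $B$-neighbour. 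Since your argument passes through $\sum c_uc_v\ge\tfrac12|B|d^2$ and then needs $\sum_t g_t/m\ge\tfrac{2|B|d}{5n}$, you would need $\sum_t g_t\ge\tfrac45\sum c_uc_v$ in the tight case, and $\tfrac12$ falls short. The ``slack between $\tfrac12$ and $\tfrac25$'' you mention is the slack in comparing $\sum c_uc_v$ to $|B|d^2$, not additional room in the exposure; it does not rescue the step. Whether a more delicate bookkeeping (bypassing $\sum c_uc_v$) could push this through is unclear, but as written the decisive step is a genuine hole, and the paper sidesteps it entirely via the permanent bound.
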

						The lemma also holds for more general configurations of $B$ and $E$, but we state it in the form which is fit for our purpose. We will need a bound on the probability that $\gnd$ does not intersect a given set of edges. For this purpose, we use the following lemma. 
					
				\begin{LEMMA} \label{lemma:avoid}
				    For each even number $N$, let $F = F(N)$ be a graph on $[N]$ consisting of at least $\beta N^2$ edges. Let $G_{N, 1}$ denote a random matching on $[N]$. Then 
				    $$\pr{G_{N, 1} \cap F = \emptyset} \le e^{-\frac{8\beta}{9} N}.$$
				\end{LEMMA}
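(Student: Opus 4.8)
The plan is to reveal the edges of the random matching $G_{N,1}$ one at a time, always matching a vertex of currently largest $F$-degree, and to control the chance that each new edge misses $F$. Formally, at step $i$ (for $i=1,\dots,N/2$) let $U_i$ be the set of still-unmatched vertices, so $|U_i|=N-2(i-1)$; pick $v_i\in U_i$ with $\deg_{F[U_i]}(v_i)=\Delta(F[U_i])=:\Delta_i$, and match $v_i$ to a uniformly random vertex of $U_i\setminus\{v_i\}$. Any deterministic rule for choosing $v_i$ yields the uniform distribution on $G_{N,1}$, and $G_{N,1}\cap F=\emptyset$ is exactly the event that every revealed edge misses $F$, so telescoping the conditional probabilities gives
\[
\pr{G_{N,1}\cap F=\emptyset}\ =\ \er{\ \textstyle\prod_{i=1}^{N/2}\big(1-\tfrac{\Delta_i}{|U_i|-1}\big)}\ \le\ \er{\exp\big(-\textstyle\sum_{i=1}^{N/2}\tfrac{\Delta_i}{|U_i|-1}\big)}.
\]
Writing $S:=\sum_{i=1}^{N/2}\Delta_i/(|U_i|-1)$, the task is to show $S\ge\tfrac89\beta N$ with probability so overwhelming that the complementary contribution is negligible beside $e^{-8\beta N/9}$.

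A short deterministic calculation already yields $S\ge\tfrac12\beta N$. Deleting the two endpoints of the $i$-th edge destroys at most $2\Delta_i$ edges of $F$ (the endpoint $w_i$ has $F$-degree at most $\Delta_i$ inside $U_i$), so $e_F(U_i)-e_F(U_{i+1})\le 2\Delta_i$, and hence $\Delta_i/(|U_i|-1)\ge \big(e_F(U_i)-e_F(U_{i+1})\big)/\big(2(|U_i|-1)\big)$. Since $1/(|U_i|-1)$ is increasing in $i$, $e_F(U_i)$ is non-increasing, and $e_F(U_{N/2+1})=0$, Abel summation gives $\sum_i\big(e_F(U_i)-e_F(U_{i+1})\big)/(|U_i|-1)\ge e_F(U_1)/(|U_1|-1)=e(F)/(N-1)\ge\beta N$; thus $S\ge\tfrac12\beta N$ and $\pr{G_{N,1}\cap F=\emptyset}\le e^{-\beta N/2}$.

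To improve the constant from $\tfrac12$ to $\tfrac89$ one exploits that $F$ cannot be depleted much faster than at the average rate. Since $U_i$ is a uniformly random subset of $[N]$ of the fixed size $|U_i|$, the count $e_F(U_i)$ has mean $e(F)\binom{|U_i|}{2}/\binom{N}{2}$ and, being the number of $F$-edges spanned by a random vertex subset, concentrates about it: a bounded-differences (vertex-exposure martingale) argument shows that, except with probability $e^{-\Omega(N)}$, one has $e_F(U_i)\ge(1-o(1))\,e(F)\binom{|U_i|}{2}/\binom{N}{2}$ simultaneously for all $i$ with $|U_i|\ge\varepsilon N$ (where $\varepsilon>0$ is a small fixed constant and the $o(1)$ depends only on $\varepsilon$ and $\beta$). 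On that event, using $\Delta_i\ge 2e_F(U_i)/|U_i|$ so that $\Delta_i/(|U_i|-1)\ge e_F(U_i)/\binom{|U_i|}{2}$, together with $\binom{|U_i|}{2}\le N^2/2$ and the fact that there are at least $(1-\varepsilon)N/2$ indices $i$ with $|U_i|\ge\varepsilon N$, we get
\[
S\ \ge\ \sum_{i\,:\,|U_i|\ge\varepsilon N}\frac{e_F(U_i)}{\binom{|U_i|}{2}}\ \ge\ (1-\varepsilon)(1-o(1))\,\frac{e(F)}{N}\ \ge\ \tfrac{9}{10}\,\beta N
\]
for $\varepsilon$ small and $N$ large. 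Splitting $\er{e^{-S}}$ over this event and its complement, and using the deterministic floor $S\ge\tfrac12\beta N$ on the (exponentially unlikely) complement, gives $\er{e^{-S}}\le e^{-8\beta N/9}$.

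The main obstacle is this concentration step, and with it the precise constant. When $F$ is close to regular, $e_F(U_i)$ is sharply concentrated and the argument runs smoothly; when $F$ is highly irregular — for instance a dense clique-like piece, or a union of many large disjoint stars — the edge count in a random subset is far less concentrated, and one must interpolate between the probabilistic estimate and the deterministic depletion inequality $e_F(U_{i+1})\ge e_F(U_i)-2\Delta_i$, for example by splitting $F$ into a high-degree part (treated deterministically, where the estimate of the previous paragraph is already near-optimal) and a low-degree part (treated via concentration). Making this bookkeeping tight enough to reach $\tfrac89$, rather than the immediate $\tfrac12$, is the real work; in the setting relevant to Theorem~\ref{thm:almostn}, where $\beta$ is a fixed positive constant, the exceptional probabilities are comfortably negligible.
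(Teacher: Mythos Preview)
Your first displayed identity is false. It is true that $\pr{e_i\notin F\mid\mathcal F_{i-1}}=1-\Delta_i/(|U_i|-1)$, where $\mathcal F_{i-1}$ is the $\sigma$-algebra of the first $i-1$ revealed edges; but iterating the tower property does \emph{not} collapse $\er{\prod_i\mathbf 1[e_i\notin F]}$ to $\er{\prod_i(1-\Delta_i/(|U_i|-1))}$, because $\Delta_j$ for $j>i$ depends on $e_i$ itself, not merely on the event $e_i\notin F$. Concretely, take $N=4$ and $F=\{\{1,2\},\{3,4\}\}$: the true avoidance probability is $2/3$, while your formula gives $(1-\tfrac13)\,\er{1-\Delta_2}=\tfrac23\cdot\tfrac23=\tfrac49$, a strict \emph{under}estimate. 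Taking instead $F$ to be the triangle on $\{1,2,3\}$ gives true probability $0$ but your formula yields $2/9$, a strict \emph{over}estimate. So there is no inequality to salvage, and the bound $\pr{G_{N,1}\cap F=\emptyset}\le\er{e^{-S}}$ is not established. There is a second error in the concentration step: $U_i$ is not a uniformly random $(N-2i+2)$-subset of $[N]$, since you remove the max-degree vertex $v_1$ deterministically and it is never in $U_2$; the edge-count concentration you invoke therefore does not apply.

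For contrast, the paper avoids sequential exposure entirely: $\pr{G_{N,1}\cap F=\emptyset}$ equals the number of perfect matchings in $\overline F$ divided by $(N-1)!!$, and the numerator is bounded by the Alon--Friedland corollary of Br\`egman's permanent inequality, $\prod_i(r_i!)^{1/(2r_i)}\le(r!)^{N/(2r)}$ with $r=(1-2\beta)N$. Stirling then yields $e^{-\beta N}$ up to a polynomial factor, which is absorbed into the constant $\tfrac89$ for large $N$.
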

					
				\begin{proof} 
				    Let $\pf$ be the set of perfect matchings on $[N]$ which do not intersect our graph $F$. Since the number of perfect matchings on $[N]$ is exactly 
$\frac{N!}{2^{\frac N2} \left(\frac N2 \right)!} 
$, we need to show that
				    $$|\pf| \leq e^{ -\frac{8\beta}{9} N} \frac{N!}{2^{\frac N2} \left(\frac N2 \right)!}.$$
						Consider the complement $\overline{F}$ of $F$. The matchings in $\pf$ are exactly perfect matchings in $\overline{F}$. We use the following estimate of Alon and Friedland~\cite{friedland}, which is a simple corollary of the Br\`egman bound on the permanent of a $(0,1)$-matrix.
							\begin{THM}[\cite{friedland}] 
							    Let $H$ be a graph on $[N]$. Let $r_1, r_2, \dots r_N$ be the degrees of the vertices in~$H$. Furthermore, denote $r = \frac{1}{N} \sum_{i=1}^N{r_i}$.
							    Then the number of perfect matchings in $H$ is at most
							    $$\prod_{i=1}^N(r_i !) ^{\frac{1}{2r_i}} \leq (r!)^{\frac{N}{2r}}.\quad \qed$$
							\end{THM}
						    
						    
						  We apply this bound directly to the graph $\overline{F}$ with $r = N - 2\beta N$, and use Stirling's formula to reach the final result. Indeed, we have
						  
						    \begin{align*}
						      | \pf|  &\leq \left( (N - 2\beta N)! \right)^{\frac{1}{2 \left(1-2\beta \right)}} \\ 
						       \pr{G_{N, 1} \cap F = \phi} & \leq \left( (N - 2\beta N)! \right)^{\frac{1}{2(1-2\beta)}} \cdot \frac{ \left(\frac N2 \right)! \cdot 2^{\frac N2}}{N!}\\
						            & = O\left(\sqrt{N}\right) \left( \frac{(1-2\beta)N}{e} \right)^{\frac N2}  \left(\frac eN \right)^{\frac N2} 
						            = O\left(\sqrt{N}\right) e^{-\beta N }.\\
						    \end{align*}
						        
				    Here we use the fact that $\frac{ N!}{
				    \left(\frac N2 \right)! \cdot 2^{\frac N2}} = \Theta(1) \left(\frac Ne \right)^{\frac N2}$, as well as the inequality $e^{1-2\beta} \leq e^{-2\beta}$. Absorbing the error term into the constant, we get, for $N$ large enough,
		            
		            $$\pr{G_{N, 1} \cap F = \emptyset} \leq e^{-\frac{8\beta}{9} N}.$$
				\end{proof}
					
				\begin{proof}[Proof of Lemma~\ref{lemma:edges}]
						We will restate the event $ \{ \mofp \supset E \text{ and } \mofp \cap B = \emptyset \}$ in terms of $P$, rather than $\mathcal{M}(P)$. For a matching $M \subset \binom{[nd]}{2}$, we denote the induced multigraph on $V = \{v_i \}_{i \in [n]}$ by $\mathcal{M}(M)$. To save on notation, we write $\mathcal{M}(M)$ for both the graph and its edge set. Conversely, if $e = \{v_i, v_j\}$ is a pair of vertices from $V$, we denote its corresponding pairs
in $\binom{[nd]}2$ by $\tilde{e} = \{\{x, y\}: x \in v_i, y \in v_j\}$. Finally, for a set $E \subset \binom{V}{2}$, we put $\tilde{E} = \bigcup_{e \in E} \tilde{e}$.
						
						Assume that $\mathcal{M}(P) \supset E$. Then we can find a matching $M \subset P$ such that $|M|= |E| = m$ and $\mathcal{M}(M)=E$. Conditioning over the possible choices of  $M$, we have
						\begin{align*}	
							&\prp{\mathcal{M}(P) \supset E  \wedge \mathcal{M}(P) \cap B   = \emptyset } \leq  \sum_{M} \prp{P \supset M}\prp{P \cap \tilde{B}  = \emptyset \mid P \supset M}.
						\end{align*}
						
						We bound the two probabilities separately. Fix a choice $M = \{ \{x_i, y_i\} : i \in [m]\}$, and let $W = [nd] \setminus\{x_1, \dots, x_m, y_1, \dots, y_m\}. $
						
						\begin{CLAIM}	\label{cl1}
							$\prp{P \supset M} \leq \frac{2}{(nd- 2m)^m}$
						\end{CLAIM}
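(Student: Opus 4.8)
The plan is to count, for a fixed matching $M=\{\{x_i,y_i\}:i\in[m]\}$ with $\mathcal M(M)=E$, the proportion of perfect matchings $P\in\pnd$ that contain $M$. Since $P$ is chosen uniformly, this proportion is exactly the number of perfect matchings of $[nd]$ extending $M$, divided by the total number of perfect matchings of $[nd]$. A perfect matching extending $M$ is precisely $M$ together with an arbitrary perfect matching on the $nd-2m$ remaining points $W$; so the count of such $P$ is $(nd-2m-1)!! = \dfrac{(nd-2m)!}{2^{(nd-2m)/2}((nd-2m)/2)!}$, and similarly the total number of perfect matchings on $[nd]$ is $(nd-1)!! = \dfrac{(nd)!}{2^{nd/2}(nd/2)!}$. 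Hence $\prp{P\supset M}$ equals the ratio of these two double factorials.

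Next I would simplify that ratio. Writing $(nd-1)!! = (nd-1)(nd-3)\cdots(nd-2m+1)\cdot(nd-2m-1)!!$, the quotient telescopes to
$$\prp{P\supset M} = \frac{1}{(nd-1)(nd-3)\cdots(nd-2m+1)} = \prod_{j=0}^{m-1}\frac{1}{nd-2j-1}.$$
Each factor satisfies $nd-2j-1 \ge nd-2m$ for $j\le m-1$, so the product is at most $(nd-2m)^{-m}$, which already gives the bound without the factor $2$. The extra factor $2$ in the claim is slack, presumably retained because the same computation will be reused in a slightly lossier setting (e.g.\ bounding $nd-2j-1\ge (nd-2m)/\text{something}$, or to absorb a rounding issue); in any case $\prp{P\supset M}\le (nd-2m)^{-m}\le 2(nd-2m)^{-m}$ finishes it. One should note $m=|E|\le nd/20$ by hypothesis, so $nd-2m>0$ and all the denominators are positive, making every step legitimate.

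There is really no serious obstacle here: the only thing to be careful about is that $M$ is a fixed matching, not summed over, so we are genuinely computing a single conditional probability by a direct double-factorial count, and that the condition $|E|\le nd/20$ guarantees $nd-2m\ge \tfrac{9}{10}nd>0$. The mild point worth stating explicitly is that the extension count is exactly $(nd-2m-1)!!$ because, once $M$ is forced into $P$, the restriction of $P$ to $W$ ranges uniformly over all perfect matchings of $W$, and $|W|=nd-2m$ is even. With that observation the claim is immediate from the telescoping product above.
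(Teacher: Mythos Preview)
Your proof is correct and in fact more elementary than the paper's. You compute the exact probability
\[
\prp{P\supset M}=\frac{(nd-2m-1)!!}{(nd-1)!!}=\prod_{j=0}^{m-1}\frac{1}{nd-2j-1}
\]
and bound each factor below by $nd-2m$, obtaining $\prp{P\supset M}\le (nd-2m)^{-m}$ directly, with the factor $2$ as pure slack. The paper instead writes the same ratio of double factorials in the form $\frac{(nd-2m)!\,(nd/2)!}{(nd)!\,((nd-2m)/2)!\,2^{-m}}$, applies Stirling's formula to each factorial, and then uses $(1-2m/(nd))^{nd/2}\le e^{-m}$ to cancel the resulting $e^m$; the factor $2$ absorbs the $(1+o(1))$ error from Stirling. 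Your telescoping argument avoids Stirling entirely and yields the sharper constant, at no cost; the paper's route is longer but has the minor advantage that the same Stirling machinery is reused elsewhere in the section. Your remark that $m\le nd/20$ keeps all denominators positive is exactly the point that needs noting.
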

						
						To show this, we just count perfect matchings. The total number of perfect matchings $P$ is $\frac{(nd)!}{\left(\frac{nd}{2}\right)!2^{\frac{nd}{2}}}$. The points from $W$ can be paired in 
            $\frac{(nd-2m)!}{\left(\frac{nd}{2}-m\right)!2^{\frac{nd}{2}-m}}$ ways. Altogether, using Stirling's formula, we get

                \begin{align*}
                    &\prp{M  \subset P} \leq  \frac{\left(nd -2m\right)!\left(\frac{nd}{2}\right)!}{\left(nd \right)!\left(\frac{nd}{2}-m\right)!2^{-m}} \\
                                    =&(1+o(1)) \left(\frac{nd-2m}{nd} \right)^{nd} \left(\frac{nd-2m}{e} \right)^{-2m} \left(\frac{nd}{nd-2m} \right)^{\frac{nd}{2}} \left( \frac{nd-2m}{e} \right)^m \\
                                    =&(1+o(1)) \left(1 - \frac{2m}{nd}\right)^{\frac{nd}{2}} \left( \frac{e}{nd-2m} \right)^m
                                    \leq \frac{2}{(nd- 2m)^m}.
                \end{align*}
             Here we used the fact that since $1-x \leq e^{-x}$, we have $(1-\frac{2m}{nd})^{\frac{nd}{2}} \leq e^{-m}$.
						
						\begin{CLAIM}\label{cl2}
							For $|B|= \beta n^2$, $\prp{P \cap \tilde{B}  = \emptyset \mid P \supset M} \leq e^{-\frac{2\beta}{5}nd}.$
						\end{CLAIM}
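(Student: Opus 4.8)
The plan is to rephrase the conditional event as a statement about a uniformly random perfect matching on the unmatched points and then apply Lemma~\ref{lemma:avoid}. Conditioning on $P \supset M$ makes $P$ a uniformly random perfect matching extending $M$, equivalently a uniformly random perfect matching on the set $W = [nd] \setminus \{x_1,\dots,x_m,y_1,\dots,y_m\}$ of the $|W| = nd-2m$ points not covered by $M$. Since the points of $[nd]\setminus W$ are already matched by $M$, a pair of $\tilde B$ can survive in $P$ only if both of its endpoints lie in $W$; hence, conditional on $P\supset M$, the event $\{P\cap\tilde B=\emptyset\}$ is exactly the event that this random matching on $W$ avoids the graph $F := \tilde B\cap\binom{W}{2}$. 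Identifying $W$ with $[|W|]$, it then suffices to produce a lower bound $|F|\ge\beta'|W|^2$ and quote Lemma~\ref{lemma:avoid}.

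Next I would bound $|F|$ from below. Each pair $e=\{v_i,v_j\}\in B$ contributes the $d^2$ point-pairs in $\tilde e$, and these families are pairwise disjoint over $e\in B$, so $|\tilde B| = |B|\,d^2 = \beta n^2 d^2$. A single point in a cell $v_i$ lies in at most $d\cdot\deg_B(v_i)\le d|V_1|$ pairs of $\tilde B$, so removing the $2m$ points of $M$ destroys at most $2m\,d\,|V_1|$ pairs. By the hypothesis $|E|=m\le\frac{1}{4|V_1|}|B|d$ this is at most $\tfrac12|B|d^2$, whence
$$|F| \;=\; \big|\tilde B\cap\tbinom{W}{2}\big| \;\ge\; |\tilde B| - 2m\,d\,|V_1| \;\ge\; \tfrac12|B|d^2 \;=\; \frac{\beta n^2 d^2}{2}.$$
Since $|W|\le nd$, this gives $|F|\ge\frac{\beta}{2}|W|^2$, so Lemma~\ref{lemma:avoid} applies with parameter $\beta' = \beta/2$; note $\beta<\tfrac12$ because $|B|\le\binom{|V_1|}{2}<n^2/2$, so $\beta'<\tfrac14$ and the lemma's hypothesis is satisfied (and $|W|$ is even since $nd$ is).

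Finally I would combine the pieces. The other hypothesis $|E|=m\le nd/20$ gives $2m\le nd/10$, hence $|W| = nd-2m\ge\frac{9}{10}nd$. Lemma~\ref{lemma:avoid} then yields
$$\prp{P\cap\tilde B=\emptyset \mid P\supset M} \;\le\; e^{-\frac{8\beta'}{9}|W|} \;=\; e^{-\frac{4\beta}{9}|W|} \;\le\; e^{-\frac{4\beta}{9}\cdot\frac{9nd}{10}} \;=\; e^{-\frac{2\beta}{5}nd},$$
which is the assertion of Claim~\ref{cl2}. The only point requiring genuine care is the bookkeeping in the middle step: checking that the pairs of $\tilde B$ lost by deleting $M$'s endpoints are bounded correctly and that the constraint $|E|\le\frac{|B|d}{4|V_1|}$ is precisely what turns this loss into the clean factor $\tfrac12$, so that the exponent emerges as at least $\tfrac{2\beta}{5}nd$ and not something weaker; beyond this arithmetic I do not anticipate any real obstacle.
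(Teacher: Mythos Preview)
Your proof is correct and follows essentially the same route as the paper's: condition on $P\supset M$ to get a uniform matching on $W$, lower-bound $|\tilde B\cap\binom{W}{2}|\ge\tfrac12\beta(nd)^2\ge\tfrac12\beta|W|^2$ via the hypothesis $m\le\tfrac{|B|d}{4|V_1|}$, apply Lemma~\ref{lemma:avoid} with parameter $\beta/2$, and finish with $|W|\ge\tfrac{9}{10}nd$. One cosmetic point: your ``exactly'' in the first paragraph is a slight overstatement (a pair of $M$ could in principle lie in $\tilde B$), but since you only need the inclusion $\{P\cap\tilde B=\emptyset\}\subseteq\{P'\cap F=\emptyset\}$ for the upper bound, this does not affect the argument.
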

							Let $W$ be as before, and denote $N = |W|$. Using the assumption $m \leq \frac{nd}{20}$, we get $N = nd - 2m \geq \frac{9nd}{10}$ . Let $B_W = \tilde{B}[W]$, that is, the set of pairs contained in $W$ which would induce~$B$. By putting the matching $M$ aside, we have lost some pairs from~$\tilde{B}$, namely those touching the vertices of $M$. Each vertex of $M$ is contained in at most $|V_1|d$ pairs from~$\tilde{B}$, so the hypothesis $m \leq \frac{1}{4|V_1|}|B|d$ implies
							$$
								|B_W| 
								\geq |B|d^2 - 2m |V_1| d = \beta n^2 d^2 \left(1 - \frac{2m |V_1|}{\beta n^2 d} \right) \geq \frac{1}{2}\beta n^2 d^2 \geq \frac 12 \beta N^2.
							$$
						
						A random matching $P$ conditioned on $P \supset M$ corresponds to a random matching on $W$, i.e.~an element of $G_{N, 1}$. Hence we can apply Lemma~\ref{lemma:avoid} with $|B_W| \geq \frac 12 \beta N^2 $, and $N = |W| \geq \frac{9nd}{10}$.
						$$\prp{P \cap B_W = \emptyset \mid P \supset M} \leq \pr{E(G_{N, 1}) \cap B_W = \emptyset} \leq e^{-\frac 89 \cdot \frac 12 \beta N}  \leq e^{-\frac{2}{5}\beta nd}.$$

						Claim~\ref{cl1} and \ref{cl2} hold for any choice of the matching $M$ with $\mathcal{M}(M) = E$. Putting them together, and using the fact that there are at most $d^{2m}$ such matchings $M$, we get
						$$\prp{\mathcal{M}(P) \supset E \text{ and } \mathcal{M}(P) \cap B = \emptyset } \leq d^{2m} \cdot \frac{2}{(nd- 2m)^m} \cdot  e^{-\frac{2\beta}{5}nd}. $$
						 Using $m \leq \frac{nd}{20}$,
						$$\prp{\mathcal{M}(P) \supset E \text{ and } \mathcal{M}(P) \cap B = \emptyset } \leq d^{2m} \cdot \left(\frac{2}{nd} \right)^m  e^{-\frac{2\beta}{5}nd} =
					    \left(\frac{2d}{n} \right)^m  e^{-\frac{2\beta}{5}nd}.$$
				\end{proof}

				\subsubsection{Expansion properties and P\'osa rotations}
				    Recall that we will be working with the union of random graphs $\gndo$ and $\gndt$. First we focus on expansion properties of $G_1 = \gndo$, which will allow us to do rotations in $G_1$. 
The aim is  to identify large sets of vertex pairs, called boosters,  
 which could increase the length of the longest path in $G_1$. Hence all the lemmas in this section will later be applied with $d$ replaced by $d_1$.
	The following lemma says that edges in $\gndo$ are uniformly distributed.
				    			    			    
				\begin{LEMMA} \label{lemma:quasirandom}
For sufficiently large $d$, with high probability $\gnd$ has 
the following two properties:
				    \begin{description} \itemsep-1pt
				        \item{(P1)} any vertex set $U$ with $|U|\leq \frac{30\log d}{d}n$ satisfies
				        $e_G(U) \leq 100 |U| \log d $;
				        \item{(P2)} for any two disjoint vertex subsets $T$ and $U$ with $|T|\geq \frac{10\log d}{ d}n $ and $|U|\geq \frac{100\log d}{ d}n $, we have
				    \begin{align}
    				    e_G(T, U) \geq 
    				   |T||U| \frac{d}{20n} . \label{twosets}
    				\end{align}
								
				    \end{description}
				\end{LEMMA}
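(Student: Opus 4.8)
The plan is to work throughout in the configuration model: since an event that holds whp for the random multigraph $\mofp$, with $P\in\pnd$ uniform, also holds whp for $\gnd$, it suffices to prove that (P1) and (P2) hold whp for $\mofp$ (when $d$ is a large constant and $n\to\infty$). Both are first--moment statements, and the reason the union bounds close is that every vertex set appearing in them has size $\Omega\!\big(\tfrac{\log d}{d}n\big)$, i.e. a constant fraction of $n$ for fixed $d$; hence the number of such sets is only $\exp\!\big(O(|U|\log d)\big)$ rather than $\exp\!\big(\Theta(|U|\log n)\big)$, and the large absolute constants hidden in the hypotheses --- above all the $100$ in $|U|\ge\tfrac{100\log d}{d}n$ --- are exactly what is needed to overcome the entropy of the union bound.

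For property (P1): if $u:=|U|\le 200\log d$ the bound is immediate in the simple graph $\gnd$, since $e_G(U)\le\binom u2<100u\log d$, so assume $200\log d<u\le\tfrac{30\log d}{d}n$ and put $m:=\lceil 100u\log d\rceil$. For a fixed $U$ I would bound the probability that $\mofp$ has at least $m$ edges inside $U$ by summing, over the at most $\tfrac{(ud)^{2m}}{2^m m!}$ sub-matchings $M$ of size $m$ on the $ud$ points lying in the cells of $U$, the estimate $\prp{P\supseteq M}\le\tfrac{2}{(nd-2m)^m}$ of Claim~\ref{cl1}; using $2m\le\tfrac{nd}{2}$ (valid for $d$ large) and $m!\ge(m/e)^m$ this yields $\prp{e_{\mofp}(U)\ge m}\le 2\big(\tfrac{eud}{100\,n\log d}\big)^{m}$. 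A union bound over the $\binom nu\le(en/u)^u$ choices of $U$ reduces (P1) to showing $\sum_{u}\big[\tfrac{en}{u}\big(\tfrac{eud}{100n\log d}\big)^{100\log d}\big]^{u}\to 0$. The bracketed base is increasing in $u$; at the top of the range $u=\tfrac{30\log d}{d}n$ it equals $\tfrac{ed}{30\log d}\,(0.3e)^{100\log d}\le d^{-19}$ for $d$ large, whereas for small $u$ it is $o(1)$ as $n\to\infty$ thanks to the factor $\big(\tfrac{eud}{100n\log d}\big)^{100\log d}$; splitting the range of $u$ at $\log^2 n$ then makes the sum tend to $0$.

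For property (P2): fix disjoint $T,U$ with $t:=|T|\ge\tfrac{10\log d}{d}n$ and $u:=|U|\ge\tfrac{100\log d}{d}n$, and set $k:=\lceil\tfrac{tud}{20n}\rceil$. Partition the $tu$ pairs $\{x,y\}$ with $x\in T,\ y\in U$ into $8k$ blocks $B_1,\dots,B_{8k}$ of nearly equal size. If $e_{\mofp}(T,U)<k$ then at most $k-1$ of these blocks contain an edge of $\mofp$, so some set $S$ of $7k$ blocks is edge-free; union-bounding over the $\binom{8k}{7k}\le(8e)^{k}$ choices of $S$ and applying Lemma~\ref{lemma:edges} with $E=\emptyset$ and $B=\bigcup_{i\in S}B_i$ --- a set of $(1-o(1))\tfrac78 tu$ pairs inside $V_1:=T\cup U$ --- gives $\prp{e_{\mofp}(T,U)<k}\le(8e)^{k}\,e^{-2|B|d/(5n)}\le e^{-c\,tud/n}$ for an absolute constant $c>0$. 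It remains to union bound over all pairs $(T,U)$: since $t$ and $u$ are constant fractions of $n$ we have $\binom nt\le\exp\!\big((1+o(1))t\log d\big)$ and $\binom nu\le\exp\!\big((1+o(1))u\log d\big)$, while the hypotheses give $\tfrac{tud}{n}\ge 100\,t\log d$ and $\tfrac{tud}{n}\ge 10\,u\log d$. Writing $c\tfrac{tud}{n}$ as a small multiple of the first of these plus the remainder of the second shows $c\tfrac{tud}{n}\ge(1+\eta)(t+u)\log d$ for a fixed $\eta>0$, so the union bound over the at most $n^2$ pairs of sizes contributes $\sum_{t,u}e^{-\Omega((\log d)^2 n/d)}\to 0$.

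I expect the main obstacle to be the constant bookkeeping in (P2): the block argument has to be run with essentially the optimal number $\approx 8k$ of blocks, and the final union bound closes only because the constant $100$ in $|U|\ge\tfrac{100\log d}{d}n$ is large enough to absorb the entropy coming from $\binom nt\binom nu$; getting the analogous margin on the $T$-side out of the weaker hypothesis $|T|\ge\tfrac{10\log d}{d}n$ is what forces the asymmetry of the two constants. A secondary subtlety, in (P1), is that the per-set failure probability must be kept in its sharp form $\big(\tfrac{eud}{100n\log d}\big)^{m}$: replacing it by the crude bound $(0.3e)^{m}$ would leave the union bound over the (polynomially many) sets $U$ of bounded size uncontrolled.
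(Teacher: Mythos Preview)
Your argument is correct. For (P1) you are doing essentially what the paper does: bound the probability that a fixed $U$ spans $m=100|U|\log d$ edges via Claim~\ref{cl1}/Lemma~\ref{lemma:edges}, keep the sharp form $\big(\tfrac{eud}{100n\log d}\big)^{m}$, and split the union bound into small and large $u$. The paper handles small $u$ by the cruder inequality~\eqref{eq1} with threshold $\sqrt n$, whereas you use the deterministic bound $e_G(U)\le\binom u2$ in the simple graph for $u\le 200\log d$ and split the remaining range at $\log^2 n$; both work.

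For (P2) your route is genuinely different. The paper fixes $T,U$, sums over all possible edge sets $E\subset T\times U$ with $|E|\le\tfrac{d}{20n}tu$, and for each such $E$ applies the full strength of Lemma~\ref{lemma:edges} with that $E$ and $B=(T\times U)\setminus E$; this yields $\prp{D_{T,U}}\le e^{-dtu/(8n)}$ directly. You instead avoid the ``containment'' half of Lemma~\ref{lemma:edges} entirely: partitioning $T\times U$ into $8k$ blocks and pigeonholing reduces the event $\{e_G(T,U)<k\}$ to the existence of $7k$ edge-free blocks, so only the avoidance bound (i.e.\ Lemma~\ref{lemma:edges} with $E=\emptyset$, equivalently Lemma~\ref{lemma:avoid}) is needed. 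Your exponent $c\approx 0.195$ is comparable to the paper's $1/8$, and with the asymmetric hypotheses $|U|\ge\tfrac{100\log d}{d}n$ and $|T|\ge\tfrac{10\log d}{d}n$ the splitting $c\tfrac{tud}{n}\ge 100c\epsilon\, t\log d+10c(1-\epsilon)\,u\log d$ indeed closes the union bound with room to spare for, say, $\epsilon=0.1$. The paper's approach is a little more direct and exercises Lemma~\ref{lemma:edges} as stated; yours shows that the full lemma is not really needed here, only the matching-avoidance estimate.
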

				
				\begin{proof}
				    We prove Lemma~\ref{lemma:quasirandom} for $G$ sampled according to the configuration model, i.e.~take $G = \mathcal{M}(P)$, where $P$ is a random element of $\pnd$. We start with (P2). Take vertex sets $T$ and $U$ in~$G$ with $|T| = t$ and $|U| = u$. We need to bound the probability of the event
					$$D_{T, U} = \left \{ e_{G}(T, U) < \frac{d}{20n}tu \right \}.$$
					For a fixed set of $m$ edges $E$ with $m \leq \frac{d}{20n}tu $, the probability that $E_{G}(T, U) = E$ is at most
					$$\left(\frac{2d}{n} \right)^m e^{-\frac{2d}{5n}(tu-m)}.$$
					This bound is a direct application of Lemma~\ref{lemma:edges} to the edge set $E$ and its bipartite complement $(T \times U) \setminus E$.
					Taking the union bound over all sets $E$, we get
					
					$$\prp{ D_{T, U}} \leq \sum_{m=0}^{\frac{d}{20n}tu} \binom{tu}{m}\left(\frac{2d}{n} \right)^m e^{-\frac{2d}{5n}(tu-m)}
					\leq
					\sum_{m=0}^{\frac{d}{20n}tu} \left(\frac{etu}{m} \cdot \frac{2d}{n} \right)^m e^{-\frac{2d}{5n}(tu-m)}.$$
					The summand is increasing in $m$, so we bound it using the largest term, $m = M =  \frac{d}{20n}tu $.
			        
			        \begin{align*}
			            \prp{ D_{T, U}} &\leq M
					\left(\frac{etu\cdot 20n}{dtu} \cdot \frac{2d}{n} \right)^{\frac{d}{20n}tu} e^{-\frac{2d}{5n}(tu-M)}
					= M \cdot \left(40e \right)^{\frac{dtu}{20n} }e^{-\frac{2d}{5n}(tu-M)}\\
					&= Me^{\frac{dtu}{n}\left(\frac{5}{20}-\frac 25 + \frac{d}{n} \right) }
					\leq e^{-\frac{dtu}{8n}}.
			        \end{align*}
							
							Finally, we take the union bound over all sets $T$ and $U$ of order at least $t_0 = \frac{10\log d}{d}n$ and $u_0 = \frac{100\log d}{d}n$ respectively.
							$$\prp{G \text{ violates (P2)}} \leq \sum_{t = t_0}^n \sum_{u = u_0}^n \binom{n}{t} \binom{n}{u} e^{-\frac{dtu}{8n}}.$$
							We use the bound $\binom{n}{t} \leq d^t = e^{t\log d}$ valid for $t \geq t_0$ and large enough $d$.
							$$\prp{G \text{ violates (P2)}} \leq \sum_{t = t_0}^n \sum_{u = u_0}^n e^{t\log d + u \log d}  e^{-\frac{dtu}{8n}} .$$
							For $t  \geq \frac{10\log d}{d}n$, we get $u\log d  \leq \frac{dtu}{10n}$. Similarly, since $u  \geq \frac{100\log d}{d}n$, it holds that $t\log d  \leq \frac{dtu}{100n}$.
							$$\prp{G \text{ violates (P2)}} \leq O(n^2) e^{\left(\frac{1}{100}+ \frac{1}{10} - \frac{1}{8}\right)\frac{n\log^2 d }{d}}
													=o(1).$$
							
							We deduce (P1) from the following more  general statement.
					\begin{CLAIM} \label{claim:rc}
					    Fix the constants $A_1$ and $A_2$ satisfying $\left(\frac{e A_1}{A_2} \right)^{A_2} \leq e^{-2}$. Then with high probability, any vertex set $U \subset V(G)$ with $|U|\leq \frac{A_1\log d}{d}n$ satisfies
				        $e_G(U) \leq A_2 |U| \log d $.
					\end{CLAIM}
			            Introducing $A_1 = 30$ and $A_2 = 100$, which indeed satisfy $\left(\frac{30e}{100} \right)^{100} \leq e^{-2}$, gives exactly (P2).
			            
			            To prove the claim, we fix a set $U$ of order $u$, and use Lemma~\ref{lemma:edges} to establish that the probability of some $A_2 u \log d$ edges occurring in $U$ is at most
			            $$\binom{u^2 / 2}{A_2 u \log d} \left(\frac{2d}{n} \right)^{A_2 u \log d}
			            \leq \left( \frac{eu }{2 A_2 \log d } \cdot \frac{2d}{n} \right)^{A_2 u \log d}.$$
			            
			            Let $D_u$  denote the event that some subset $U$ with $|U| = u$ spans more than $A_2 u \log d$ edges. We have
			            \begin{equation}\label{eq1}
			                \prp{D_u} \leq \binom{n}{u} \left( \frac{e u d}{ A_2n \log d }  \right)^{A_2 u \log d} 
			                \leq \left[\frac{ne}{u} \left( \frac{eu d}{ A_2n \log d }  \right)^{A_2 \log d } \right]^u.
			            \end{equation}
			                The term in square brackets is increasing in $u$, so for $u \leq \frac{A_1\log d}{d}n$, 
			            \begin{align*}
			                \prp{D_u} \leq \left[\frac{ed}{A_1 \log d} \left( \frac{e A_1}{ A_2 }  \right)^{A_2 \log d }\right]^u 
			                \leq \left[\frac{ed}{A_1 \log d} e^{-2 \log d }\right]^u < d^{-u}.
			            \end{align*}
			            
			            Here we used the condition $\left(\frac{e A_1}{A_2} \right)^{A_2} \leq e^{-2}$. For $u \leq \sqrt{n}$ we use (\ref{eq1}) to get a stronger bound
			            $$\prp{D_u} \leq \left(O\left( n^{\frac{1}{2}(1-A_2 \log d)} \right) \right)^u < n^{-1},$$
valid for large $d$. Putting the two bounds together,
			           $$ \prp{G \text{ violates (P1)}} \leq \sum_{u = 1}^{\sqrt{n}} \prp{D_u}+ \sum_{u = \sqrt{n}} ^{\frac{A_1\log d}{d}n} \prp{D_u} \leq  \sqrt{n} \cdot n^{-1} + \sum_{u = \sqrt{n} }^{\frac{A_1\log d}{d}n} d^{-u} = o(1),$$
			           completing the proof of Claim~\ref{claim:rc}.			           
			           To recapitulate, applying the claim for $A_1 = 30$ and $A_2 = 100$ gives that $G = \mathcal{M}(P)$ satisfies (P1) with high probability.
			           
			           Since the random graph $\gnd$ is contiguous to $G$, we conclude that for large enough $d$, $\gnd$ satisfies (P1) and (P2).
			            
				\end{proof}						
				    The next step is to build subsets of $[n]$ which will later give us the required anagram. In everything that follows, take $\alpha =  10^5$. Given a $d$-regular graph $G$, we say that a subset $V_1 \subset [n]$ is \emph{$G$-dense} if $\frac{\alpha \log d}{ 2d}n \geq|V_1| \geq \frac{\alpha \log d}{4 d}n$, and any vertex $v \in V_1$ has at least $\frac{\alpha}{160}\log d$ neighbors in~$V_1$.

				\begin{LEMMA} \label{lemma:splitting}
				    Suppose we are given a $d$-regular graph $G$ on $[n]$ with properties (P1) and (P2), and a vertex coloring $c: [n] \to \mathcal{C}$ with $|\mathcal{C}| = \left(1- \frac{\alpha \log d}{d} \right)n$ colors. For sufficiently large $d$ and $n$, there exist two disjoint $G$-dense sets of vertices $V_1, V_2 \subset [n]$ which have the same coloring.\\
				    
				\end{LEMMA}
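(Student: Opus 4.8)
The plan is to convert the colour bound into a large family of vertex-disjoint monochromatic pairs, and then obtain $V_1$ and $V_2$ as two ``parallel halves'' of the union of such pairs, using the quasirandomness (P1)--(P2) to force the minimum-degree condition.

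\emph{Step 1: monochromatic pairs.} Write $k_a=|c^{-1}(a)|$, so $\sum_a k_a=n$ and the number of colours used is at most $|\mathcal C|=(1-\frac{\alpha\log d}{d})n$; hence $\sum_a(k_a-1)\ge\frac{\alpha\log d}{d}n$. Choosing $\lfloor k_a/2\rfloor$ disjoint pairs inside each colour class yields a family $\mathcal Q$ of $q:=\sum_a\lfloor k_a/2\rfloor\ge\frac12\sum_a(k_a-1)\ge\frac{\alpha\log d}{2d}n$ vertex-disjoint monochromatic pairs. Discarding pairs we may assume $q=\frac{\alpha\log d}{2d}n$, and set $W=V(\mathcal Q)$, so $|W|=\frac{\alpha\log d}{d}n$. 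The point of the pairs is that if we designate, for each pair, one endpoint ``left'' and the other ``right'', the resulting sets $L,R$ are disjoint, satisfy $|L|=|R|=q$, and have \emph{the same colouring}, since every monochromatic pair puts one vertex of its colour on each side. More generally, any sub-family $\mathcal Q'\subseteq\mathcal Q$ together with a left/right designation gives two disjoint sets with equal colouring, so it suffices to produce such a $\mathcal Q'$ and a designation for which both sides are $G$-dense.

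\emph{Step 2: density from quasirandomness.} By (P2), any $Z$ with $|Z|\ge\frac{200\log d}{d}n$ satisfies $e_G(Z)=\Omega(|Z|^2d/n)$ (partition $Z$ into parts of size just above $\frac{100\log d}{d}n$ and sum (P2) over pairs of parts), so $G[Z]$ has average degree $\Omega(\alpha\log d)$. Consequently, if one repeatedly deletes from a set $S$ of size $\frac{\alpha\log d}{2d}n$ any vertex of current degree $<\frac{\alpha}{160}\log d$, then the deleted set $D$ has $e_G(D)<\frac{\alpha}{160}\log d\,|D|$; comparing this with $e_G(D)=\Omega(|D|^2d/n)$ (valid once $|D|\ge\frac{200\log d}{d}n$, and otherwise $|D|$ is already tiny, using also (P1)) forces $|D|<\frac{\alpha\log d}{4d}n$. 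Hence the $\frac{\alpha}{160}\log d$-core of $G[S]$ has size in $[\frac{\alpha\log d}{4d}n,\frac{\alpha\log d}{2d}n]$ and minimum degree $\ge\frac{\alpha}{160}\log d$, i.e.\ it is $G$-dense.

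\emph{Step 3 and the main obstacle.} The natural way to combine the two steps is to run the peeling of Step 2 on the two halves $L$ and $R$ \emph{simultaneously}, maintaining equal colourings: whenever a vertex is deleted from one side because its degree there has dropped below $\frac{\alpha}{160}\log d$, delete its pair-partner from the other side as well. This process terminates with two disjoint, equally coloured sets $V_1\subseteq L$ and $V_2\subseteq R$, each of minimum degree $\ge\frac{\alpha}{160}\log d$; one then only needs that fewer than $\frac{\alpha\log d}{4d}n$ pairs were ever removed, so that $|V_i|\ge\frac{\alpha\log d}{4d}n$. This last point is the crux. The difficulty is that a vertex may be deleted not because it is low-degree on its own side but purely because its partner was deleted from the other side; such a ``collateral'' vertex may still send up to $d$ edges into the surviving part, and since $d\gg\frac{\alpha}{160}\log d$ the clean edge-count of Step 2 no longer suffices on its own. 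Overcoming this is precisely where the large constant $\alpha=10^5$ is spent: one must choose the pairs and their left/right labels so that $G[L]$ and $G[R]$ already sit close to the quasirandom edge density -- making collateral deletions rare -- and then bound the deleted set on both sides at once via (P1)--(P2). The two surviving sides are then the $G$-dense sets required by the lemma.
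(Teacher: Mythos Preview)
Your Step~3 contains a genuine gap. You correctly isolate the obstacle---a ``collateral'' deletion on one side can force the removal of a perfectly healthy vertex on the other side, and since such a vertex may contribute up to $d\gg\frac{\alpha}{160}\log d$ edges into the surviving part, the degeneracy bookkeeping of Step~2 collapses---but you do not actually overcome it. The sentence ``one must choose the pairs and their left/right labels so that $G[L]$ and $G[R]$ already sit close to the quasirandom edge density'' is an aspiration, not an argument: (P1)--(P2) give you global edge counts, not control over \emph{which individual vertices} lose their partners, and nothing you have written prevents a cascade in which each collateral deletion on one side triggers a genuine low-degree deletion on the other. As it stands the proof is incomplete at exactly the point where the work is.

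The paper avoids the cascade entirely by reversing the order of your two operations. It first peels on the \emph{whole} paired set $W$: repeatedly remove any vertex with fewer than $\frac{\alpha}{40}\log d$ neighbours in the current set, together with its colour-partner. Here the collateral issue is harmless, because one only needs to bound the number of \emph{genuine} low-degree removals, and (P2) applied to the removed set $R_t$ versus the surviving set shows the process stops after at most $\frac{10\log d}{d}n$ steps, leaving a set $Z$ with $|Z|\ge\frac{\alpha\log d}{2d}n$, every colour class of even size, and minimum degree $\ge\frac{\alpha}{40}\log d$ in $G[Z]$. Only then is $Z$ split into $V_1,V_2$ by orienting each pair uniformly at random; since every $v\in Z$ already has $\ge\frac{\alpha}{40}\log d$ neighbours in $Z$, a Chernoff bound gives $\Pr[B_v]\le d^{-3}$ for the event that $v$ retains fewer than $\frac{\alpha}{160}\log d$ neighbours on its own side, and as each $B_v$ depends on at most $2d^2$ others, the Lov\'asz Local Lemma yields a good split. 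In short, the missing idea is to replace your deterministic simultaneous peeling by ``peel first to get a robust $Z$, then split randomly and use concentration plus the Local Lemma''.
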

				
				\begin{proof} Let $c$ be a coloring of the vertices of $G$ into 
	$\left(1- \frac{\alpha \log d}{d} \right)n$ colors.
						\begin{CLAIM} There exists a subset $Z \subset V(G)$ satisfying $\frac{\alpha \log d}{d}n \geq |Z| \geq \frac{\alpha \log d}{2d}n $ such that each color appears in $Z$ an even number of times, and for all $v \in Z$, $|N_G(v) \cap Z | \geq \frac{\alpha}{40} \log d$.
						\end{CLAIM}
        We construct $Z$ using the following algorithm. Let $V(G) = [n]$, and denote $\delta =  \frac{\alpha}{40} \log d$. Let $X$ contain one vertex from each color class with an odd number of colors, so $|X| \leq \left(1- \frac{\alpha \log d}{d} \right)n$. We assume $|X| = \left(1- \frac{\alpha \log d}{d} \right)n$, by discarding more pairs of vertices of the same color if necessary. Furthermore, set $R_0 = \hat{R}_0 = \emptyset$. Note that from this step onwards, all color classes in $[n] \setminus (X \cup R_i \cup \hat{R}_i)$ will have even order. For $i \geq 0$, we form $R_{i+1} := R_i \cup \{v\}$, $\hat{R}_{i+1} = \hat{R}_i \cup \{w \}$, where $v$ is the smallest vertex with fewer than $\delta $ neighbors in $[n] \setminus (X \cup R_i \cup \hat{R}_i)$, and $w$ is the smallest vertex with $c(v)= c(w)$. When there are no such vertices $v$, set $Z = [n] \setminus (X \cup R_i \cup \hat{R}_i)$ and terminate the algorithm. We claim that this occurs after at most $ \frac{10 \log d}{d}n $ steps.

        Suppose that it is not the case and $G$ satisfies (P1) and (P2), but the algorithm continues beyond $t =  \frac{10 \log d}{d} n $ steps. Look at the sets $R_t$ and $Z_t =  [n] \setminus (X \cup R_t \cup \hat{R}_t)$. Each vertex from $R_t$ has fewer than $\delta$ neighbors in $Z_t$, so
        $$e_G(R_t, Z_t) < \delta |R_t| =  \frac{\alpha}{40} |R_t| \log d .$$
        
        On the other hand, since $|R_t| = t = \frac{10\log d}{d}n$ and $|Z_t|  =  \frac{\alpha\log d}{d}n - 2t \geq  \frac{\alpha\log d}{2d}n$, the property (P2) gives
        $$e_G(R_t, Z_t) \geq |R_t||Z_t|\frac {d}{20n} \geq |R_t| \cdot \frac{\alpha \log d}{2d} \cdot \frac{d}{20n} = \frac{\alpha}{40}  |R_t| \log d.$$
        We reached a contradiction, so indeed we have the desired set $Z$ with $|Z| \geq \frac{\alpha \log d}{2d}n $.
				
				We show the existence of the required partition of $Z$ into sets $V_1$ and $V_2$ using a probabilistic argument.  Partition each color class $c^{-1}(a)$ into ordered pairs arbitrarily, and denote the collection of pairs by $Q$. For each pair $(v, w) \in Q$, randomly and independently put $v$ into $V_1$ and $w$ into $V_2$ or vice versa, with probability $\frac 12$. This guarantees that $V_1$ and $V_2$ have the same coloring.

		\begin{CLAIM} \label{cl4} 
								With positive probability, the partition satisfies $\deg_{G[V_i]}(v) \geq \frac{\alpha}{160} \log d$ for all $v \in V_i$ and $i \in \{1, 2 \}$.
			\end{CLAIM} 					
					We use the Local Lemma. Fix a vertex $v$, wlog $v \in V_1$. Let $B_v$ be the event that fewer than $\frac{\alpha}{160}\log d$ neighbors of $v$ in $Z$ have ended up in $V_1$. Let $S$ be a set of $\frac{\alpha }{40}\log d$ neighbors of $v$ in $Z$, and let $T \subset S$ be the set of vertices whose match according to $Q$ does not lie in $S$. Note that $S$ contains exactly $y =\frac 12(|S|-|T|)$ pairs of $Q$. If $B_v$ occurs, then  $|S \cap V_1| \leq \frac{\alpha}{160} \log d$, and therefore $\frac 12 |S| - |S \cap V_1| \geq \frac{\alpha}{160} \log d$.
					But this implies
					$$
					\frac 12 |T| - |T \cap V_1| =
					\frac 12  \left( |S| -2y\right) - (|S \cap V_1| - y )
					\geq \frac{\alpha}{160} \log d.$$
					
					$|T \cap V_1|$ is a random variable with distribution $B\left(|T|, \frac 12\right)$, so Chernoff bounds (as stated in \cite[Remark 2.5]{JLR}) 
					give
					$$\pr{B_v} = \pr{\frac 12 |T| - |T \cap V_1| \geq \frac{\alpha}{160} \log d} \leq e^{-\frac{2}{|T|} \left( \frac{\alpha}{160} \log d \right)^2} \leq e^{-3 \log d} = d^{-3}.$$
					Here we used $|T| \leq \frac{\alpha}{40}\log d$ and $\alpha = 10^5$.
	
					Two events $B_v$ and $B_w$ are dependent only if $v$ and $w$ share a neighbor, or if some two neighbors of $v$ and $w$ are paired. In such a dependency graph, the event $B_v$ has degree at most $2d^2$. Since for sufficiently large $d$,
					$$e(2d^2 +1)d^{-3} < 1,$$
					the Lov\'asz Local Lemma grants that there is a splitting avoiding all the bad events $B_v$. This is exactly the required splitting. It concludes the proof of Claim~\ref{cl4}
					and Lemma~\ref{lemma:splitting}.
			\end{proof}
				
				We say a graph $G$ is a $p$-expander if it is connected, and
				$|N_G(U)| \geq 2|U|$ for $|U| \leq p$.
				
				\begin{LEMMA} \label{lemma:expander}
					Let $G$ be a $d$-regular graph on vertex set $[n]$ satisfying (P1) and (P2), and let $V_1 \subset [n]$ be a $G$-dense subset of $[n]$. Then $G[V_1]$ is a $\left( \frac{|V_1|}{4} \right)$-expander.
				\end{LEMMA}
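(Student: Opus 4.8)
The plan is to verify the two conditions in the definition of a $\left(\frac{|V_1|}{4}\right)$-expander for $G[V_1]$: connectivity and the vertex-expansion inequality $|N_{G[V_1]}(U)| \geq 2|U|$ for every $U \subseteq V_1$ with $|U| \leq \frac{|V_1|}{4}$. The expansion inequality is the heart of the matter, and connectivity will follow as a byproduct (any $p$-expander on more than $3p$ vertices is automatically connected, since a smallest component of size at most $\frac{|V_1|}{2}$ would fail to expand). So I would concentrate on the neighborhood bound and argue by contradiction: suppose $U \subseteq V_1$ with $|U| \leq \frac{|V_1|}{4}$ has $|N_{G[V_1]}(U)| < 2|U|$, and write $W = U \cup N_{G[V_1]}(U)$, so that all $G$-edges leaving $U$ within $V_1$ stay inside $W$, and $|W| < 3|U| \leq \frac{3|V_1|}{4} \le \frac{3\alpha \log d}{8d}n < \frac{30\log d}{d}n$, which is exactly the regime where (P1) applies.

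Next I would count edges two ways. On the one hand, since $V_1$ is $G$-dense, every vertex $v \in U$ has at least $\frac{\alpha}{160}\log d$ neighbors inside $V_1$, and all of these lie in $W$; hence $e_G(W) \geq \frac{1}{2}|U|\cdot \frac{\alpha}{160}\log d$ (the factor $\frac12$ because each edge inside $U$ is counted twice, edges from $U$ to $N_{G[V_1]}(U)$ once — so really $e_G(W)$ is at least $\frac12 \sum_{v\in U}\deg_{G[V_1]}(v) \geq \frac{\alpha}{320}|U|\log d$). On the other hand, (P1) applied to the set $W$ of size at most $\frac{30\log d}{d}n$ gives $e_G(W) \leq 100|W|\log d < 300|U|\log d$. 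Comparing the two bounds, $\frac{\alpha}{320}|U|\log d \leq e_G(W) \le 300 |U|\log d$, i.e. $\alpha \le 320\cdot 300 = 96000 < 10^5 = \alpha$, a contradiction. (One should double-check the precise constant bookkeeping — the $G$-dense degree threshold is $\frac{\alpha}{160}\log d$, so the lower bound is $\frac{\alpha}{320}|U|\log d$, and indeed $\frac{\alpha}{320} > 100\cdot 3$ precisely because $\alpha = 10^5 > 96000$, so the chosen constant $\alpha$ is exactly what makes this work.)

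The only genuine subtlety — and the step I expect to need the most care — is making sure the set $W$ on which I invoke (P1) really does have size at most $\frac{30\log d}{d}n$. This uses $|W| < 3|U| \le \frac34 |V_1|$ together with the $G$-dense upper bound $|V_1| \le \frac{\alpha\log d}{2d}n$, giving $|W| < \frac{3\alpha\log d}{8d}n$; but $\frac{3\alpha}{8} = \frac{3\cdot 10^5}{8} = 37500$, which is \emph{larger} than $30$. So a naive application fails, and one must instead be slightly cleverer: either strengthen (P1)'s range — but Lemma~\ref{lemma:quasirandom} only gives it up to $\frac{30\log d}{d}n$ — or, better, observe that we may assume $|U|$ is as small as possible for a violation and handle the case $|U|$ large separately, or apply (P1) not to $W$ but to an auxiliary set of controlled size. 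The cleanest fix is probably to note that if $|N_{G[V_1]}(U)|<2|U|$ already fails for some $U$, it fails for a \emph{minimal} such $U$, and for a minimal counterexample $G[V_1][U]$ must itself be dense enough that we can bound $|U|$ directly against $\frac{10\log d}{d}n$ via (P1) applied to $U$ alone; then $|W| < 3|U| \le \frac{30\log d}{d}n$ as needed. I would work out that minimality reduction carefully, since everything else is a routine constant comparison.
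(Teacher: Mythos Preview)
You correctly identified the core argument \emph{and} the genuine obstacle: with $|U|$ close to $\tfrac{|V_1|}{4}$, the set $W = U \cup N_H(U)$ can have order up to roughly $\tfrac{3\alpha}{8}\cdot\tfrac{\log d}{d}n$, far outside the range of (P1). However, your proposed minimality fix does not work. If $U$ is a minimal set with $|N_H(U)| < 2|U|$, then for each $v\in U$ one has $|N_H(U\setminus\{v\})|\ge 2|U|-2$; since $N_H(U\setminus\{v\})\subseteq N_H(U)\cup\{v\}$, this only yields $|N_H(U)|\ge 2|U|-3$, which gives no upper bound on $|U|$ whatsoever. Nor does minimality force enough edges inside $U$ to let (P1) ``applied to $U$ alone'' bound its size. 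The idea is a dead end.

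The missing ingredient is property (P2), which you never invoke. The paper splits by the size of $U$. If $|U|\le \tfrac{10\log d}{d}n$, then $|W|<3|U|\le\tfrac{30\log d}{d}n$ and your (P1) double-counting argument goes through verbatim, yielding $\tfrac{\alpha}{320}|U|\log d \le 100|W|\log d$ and hence $|W|>3|U|$, a contradiction. If instead $\tfrac{10\log d}{d}n < |U|\le \tfrac{|V_1|}{4}$, then $|V_1\setminus W|\ge |V_1|-3|U|\ge\tfrac{|V_1|}{4}\ge\tfrac{\alpha}{16}\cdot\tfrac{\log d}{d}n>\tfrac{100\log d}{d}n$, so (P2) applied to the pair $\bigl(U,\,V_1\setminus W\bigr)$ produces an edge of $G$ from $U$ to $V_1\setminus W$ --- but any such edge would put a vertex of $V_1\setminus W$ into $N_H(U)$, contradicting the definition of $W$. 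The paper also handles connectivity via (P2) (a smallest component $S$ with $\tfrac{|V_1|}{4}<|S|\le\tfrac{|V_1|}{2}$ would have $E_G(S,V_1\setminus S)=\emptyset$, violating (P2)); your subset-of-a-component trick is a valid alternative for that part once expansion is in hand.
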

				
				\begin{proof}
					Denote $H = G[V_1]$. To show expansion, suppose for the sake of contradiction that $|N_H(U)| < 2|U|$, and first assume that $|U| \leq \frac{10\log d}{d}n$. We can apply (P1) to  $T = U \cup N_H(U) \subset V_1$, using the assumption $|T| \leq \frac{30\log d}{d}n$. This gives $e(G[T]) \leq 100|T| \log d .$ Counting all the edges with an endpoint in $U$, which certainly lie inside $T$, we get
				$\frac 12 \cdot \frac{\alpha}{160}|U| \log d  \leq e(G[T]).$
				The two inequalities imply $|T| \geq \frac{1}{100}\cdot \frac{\alpha}{320}|U| = \frac{  10^5}{32000} > 3|U|$, which contradicts our assumption.
					
			        Secondly, in case $\frac{|V_1|}{4} \geq |U|\geq \frac{10\log d}{d}n$ and $|N_G(U)|< 2|U|$, we have $|V_1 \setminus (U \cup N_H(U))| \geq \frac{|V_1|}{4} \geq \frac{ 10^4\log d}{d}n $. This puts us in the position to apply  (P2) and claim that $G$ contains edges between  $U$ and $V_1 \setminus (U \cup N_H(U)) $, contradicting the definition of $N_H(U)$. Hence sets of order up to $\frac{|V_1|}{4} $ indeed expand in $H$.
			        
			       Finally, assume that $H$ is not connected. Let its smallest component be spanned by $S \subset V_1$, i.e.~$|S| \leq \frac{|V_1|}{2}$ and $N_H(S) = \emptyset$. We already showed that certainly $|S| > \frac{|V_1|}{4}$. But then the fact that $E_G(S, V_1 \setminus S) = \emptyset$ contradicts (P2).
					
				\end{proof}

				We will use these expansion properties to build long paths and ultimately a Hamilton cycle in $G$. Our approach is based on the rotation-extension technique originally developed by P\'osa.
				Given a graph $G$, denote the length (number of edges) of the longest path in $G$ by $\ell(G)$. We say that a non-edge $\{u, v\} \notin E(G)$ is a \emph{booster} with respect to $G$ if $G + \{u, v\}$ is Hamiltonian or $\ell(G + \{u, v\}) > \ell(G)$. We denote the set of boosters in $G$ by $B(G)$. 
P\'osa's rotation technique guarantees that there exist plenty of boosters in $G$
(see, for instance,  Corollary 2.10 from~\cite{lubetzky}).
				
				\begin{LEMMA}\label{lemma:boosters} Let $p$ be a positive integer. Let $G = (V, E)$ be a $p$-expander. Then  $|B(G)| \geq \frac{p^2}{2}$.
				\end{LEMMA}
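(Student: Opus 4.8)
\textbf{Proof proposal for Lemma~\ref{lemma:boosters}.}

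The plan is to invoke P\'osa's rotation-extension lemma in its standard form. First I would recall the combinatorial setup: fix a longest path $P = v_0 v_1 \dots v_\ell$ in $G$. Since $P$ is longest, every neighbor of $v_0$ (and of $v_\ell$) lies on $P$. A \emph{rotation} of $P$ with fixed endpoint $v_0$ picks an edge $\{v_\ell, v_i\} \in E(G)$ with $i < \ell$ and produces the new longest path $v_0 v_1 \dots v_i v_\ell v_{\ell-1} \dots v_{i+1}$, whose new endpoint is $v_{i+1}$. Let $\mathrm{END}(P)$ denote the set of all vertices that can serve as an endpoint of a longest path obtained from $P$ by a sequence of such rotations (keeping $v_0$ fixed throughout).

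The key step is P\'osa's lemma: in a connected graph, if no rotation sequence ever produces a path that can be extended (i.e. $G$ has no path longer than $\ell$ and $G$ is not Hamiltonian via closing up), then $|N_G(\mathrm{END}(P))| < 2|\mathrm{END}(P)|$; equivalently, if $|N_G(U)| \ge 2|U|$ for all $U$ with $|U| \le p$, then $|\mathrm{END}(P)| > p$. Assuming this (it is exactly the content of Corollary 2.10 in \cite{lubetzky}, which we are entitled to cite), I would argue as follows. Since $G$ is a $p$-expander, $|\mathrm{END}(P)| > p$. Now for each endpoint $w \in \mathrm{END}(P)$, obtained from $P$ by some rotation sequence giving a longest path $P_w$ from $v_0$ to $w$, apply the symmetric argument fixing $w$ and rotating the other end: the set $\mathrm{END}(P_w)$ of vertices reachable as the $v_0$-side endpoint also has size $> p$ by the same expansion hypothesis. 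For every $w' \in \mathrm{END}(P_w)$ with $\{w, w'\} \notin E(G)$, the non-edge $\{w, w'\}$ is a booster: adding it to $G$ either closes the longest $w$--$w'$ path into a longer path or a Hamilton cycle, so $\{w,w'\}$ lies in $B(G)$.

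This yields, for each of the $> p$ choices of $w$, at least $|\mathrm{END}(P_w)| - \deg_G(w) - 1 $ boosters, which is not immediately $\ge p/2$ without care; the clean way, and the one I would follow, is to observe that pairs $\{w, w'\}$ with $w \in \mathrm{END}(P)$, $w' \in \mathrm{END}(P_w)$, $\{w,w'\} \notin E(G)$ are all boosters, and each booster is counted at most a bounded number of times — in fact the standard bookkeeping (see \cite{lubetzky}) gives $|B(G)| \ge \tfrac{1}{2}(p+1) \cdot p \cdot \tfrac{1}{?}$; to get exactly the stated bound $|B(G)| \ge p^2/2$ I would just cite the packaged statement rather than re-derive the constant. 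The only genuine obstacle is making sure the double-counting constant comes out to give $p^2/2$ rather than, say, $p^2/4$; since the lemma is quoted verbatim from \cite{lubetzky}, I would present the proof as: ``This is Corollary 2.10 of \cite{lubetzky}; we sketch the argument for completeness,'' then give the two-paragraph rotation sketch above and defer the exact constant to the reference.
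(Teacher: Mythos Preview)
Your approach matches the paper's exactly: the paper does not prove Lemma~\ref{lemma:boosters} at all but simply cites Corollary~2.10 of \cite{lubetzky}, and you propose to do the same while adding a standard P\'osa rotation sketch for completeness. Your sketch is correct in outline (double rotation to produce $>p$ endpoints on each side, each endpoint pair yielding a booster), and your decision to defer the exact constant to the reference is precisely what the paper does.
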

				
			\subsubsection{Using $G_2$ to hit boosters in $G_1$}
				Now we move on to $G_2 = G_{n, d_2}$. Recall that we would like to add its edges to  $G_1[V_1]$ and complete a cycle on $V_1$. However, we have to argue carefully because the choice of a $G_1$-dense set $V_1$ will depend on the given vertex coloring.
				
				\begin{LEMMA} \label{lemma:hamiltonian}
					Let $G_1$ be a $d_1$-regular graph on $[n]$ with properties (P1) and (P2), 
for sufficiently large~$d_1$, and  let $\frac{d_1}{150} \leq d_2 \leq \frac{d_1}{100}$. With high probability, $G_2 = \gndt$ has the property that for any $G_1$-dense subset $V_1 \subset [n]$, $(G_1 \cup G_2)[V_1]$ is Hamiltonian.
				\end{LEMMA}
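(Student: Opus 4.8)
The plan is to fix a set $V_1$ of size $t$ with $\frac{\alpha\log d_1}{4d_1}n\le t\le\frac{\alpha\log d_1}{2d_1}n$, bound $\pr{(G_1\cup G_2)[V_1]\text{ is not Hamiltonian}}$ from above by something much smaller than $\binom{n}{t}^{-1}$, and then take a union bound over the at most $\sum_t\binom nt$ possible $G_1$-dense sets. Since $\binom nt\le(en/t)^t\le e^{t\log d_1}$ for $t$ in this range, it suffices to show that for a fixed such $V_1$ the failure probability is at most $e^{-2t\log d_1}$; summing $\binom nt\cdot e^{-2t\log d_1}\le e^{-t\log d_1}$ over the $O(n)$ admissible values of $t$ then gives $o(1)$.

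The deterministic input is P\'osa's rotation--extension method. By Lemma~\ref{lemma:expander}, $G_1[V_1]$ is a $(t/4)$-expander, and since adding edges neither disconnects a graph nor shrinks neighbourhoods, every graph $H$ with $G_1[V_1]\subseteq H\subseteq\binom{V_1}{2}$ is still a $(t/4)$-expander and hence, by Lemma~\ref{lemma:boosters}, has at least $t^2/32$ boosters; as boosters are by definition non-edges, the booster pairs of any such $H$ form at least a $1/16$-fraction of all $\binom t2$ pairs inside $V_1$. Consequently, if we add the edges of $G_2[V_1]$ to $G_1[V_1]$ one by one and at least $t$ of these additions happen to be boosters of the graph built so far, then the longest path inside $V_1$ has been lengthened $t$ times, which is impossible unless the graph has become Hamiltonian.

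To produce $t$ such booster-additions I would expose the edges of $G_2$ lying inside $V_1$ one at a time in a uniformly random order, maintaining $H_i=G_1[V_1]\cup\{e_1,\dots,e_i\}$ and letting $X_i$ be the indicator that $e_i$ is a booster of $H_{i-1}$. Conditioned on the edges revealed so far, $e_i$ is (essentially) a uniformly random member of the remaining $\approx e_{G_2}(V_1)$ edges of $G_2[V_1]$, while a $(1-o(1))\tfrac{d_2}{n}$-fraction of the $\ge t^2/32$ booster pairs of $H_{i-1}$ are themselves (as-yet-unseen) edges of $G_2[V_1]$; dividing, $\pr{X_i=1\mid e_1,\dots,e_{i-1}}$ is bounded below by an absolute constant $c>0$ for every $i$ up to, say, half of $e_{G_2}(V_1)$. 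Hence $\sum_iX_i$ stochastically dominates a binomial variable, and since the hypothesis $d_2\ge d_1/150$ together with the quasirandom edge count (Lemma~\ref{lemma:edges}) gives $e_{G_2}(V_1)\ge c'(\log d_1)\,t$ with probability $1-e^{-\Omega((\log d_1)t)}$, a Chernoff bound shows that at least $t$ of the $X_i$ equal $1$ except with probability $e^{-\Omega((\log d_1)t)}$, which — with the constants tuned — beats $e^{-2t\log d_1}$. Summing over all $G_1$-dense $V_1$ and passing from the configuration model to $\gndt$ by contiguity finishes the proof.

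The step I expect to demand the real work is the bookkeeping of constants. There are $e^{\Theta((\log d_1)^2 n/d_1)}$ candidate sets $V_1$, so the per-$V_1$ failure probability must be smaller than $e^{-\Theta(t\log d_1)}$; this is possible only because $G_2[V_1]$ has far more than $\log d_1$ edges per vertex (so that the binomial above has mean a large multiple of $(\log d_1)t$) and because a constant proportion of the pairs inside $V_1$ are boosters, and it is precisely to create this slack that $\alpha$ is taken as large as $10^5$ and $d_2$ is forced to be at least a fixed fraction of $d_1$, so that enough of $G_2$'s edges land inside $V_1$. A secondary, more routine technical point is to justify that, along the random exposure of $G_2[V_1]$, the ``$\ge t^2/32$ boosters, a $(1-o(1))\tfrac{d_2}{n}$-fraction of which are unseen $G_2$-edges'' estimate really holds at every step with enough room to survive the union bound (equivalently, one could expose $G_2$ as a union of random perfect matchings, each contributing only $o(t)$ edges inside $V_1$); here one uses the quasirandomness of $G_2$ and the fact that the relevant booster pairs are automatically among the pairs not yet inspected.
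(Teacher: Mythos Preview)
Your high-level plan matches the paper's: show $G_1[V_1]$ is a $(|V_1|/4)$-expander, use P\'osa rotations to get $\ge t^2/32$ boosters at every stage, argue that $G_2$ supplies $t$ successive boosters, and union bound over the $e^{O((\log d_1)^2 n/d_1)}$ candidate sets $V_1$. The gap is in the step you label ``secondary'' and ``routine''. Your martingale claim $\pr{X_i=1\mid e_1,\dots,e_{i-1}}\ge c$ rests on the assertion that a $(1-o(1))\tfrac{d_2}{n}$-fraction of the $\ge t^2/32$ boosters of $H_{i-1}$ are edges of $G_2$. But $H_{i-1}$, and hence its booster set, is determined by $e_1,\dots,e_{i-1}$, which are themselves edges of $G_2$; so the set on which you want the quasirandom edge count is \emph{not} independent of $G_2$, and no property of the form (P2) applies. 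Your alternative of exposing $G_2$ as $d_2$ independent perfect matchings does restore independence between the $j$th matching and the booster set built from the first $j-1$, but it buys you at most one certified booster per matching, and $d_2=\Theta(d_1)$ is far smaller than the $t=\Theta\left(\tfrac{n\log d_1}{d_1}\right)$ boosters you need.

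The paper resolves this dependence by decoupling the history from $G_2$. Instead of tracking only sequences of $G_2$-edges, it takes a union bound over \emph{all} edge sets $E\subset\binom{V_1}{2}$ with $|E|\le|V_1|$. For each such fixed $E$, the booster set $B=B((G_1+E)[V_1])$ is deterministic, and Lemma~\ref{lemma:edges} gives the joint bound $\prp{G_2\supset E\text{ and }G_2\cap B=\emptyset}\le(2d_2/n)^{|E|}e^{-\Omega(\xi^2 nd_2)}$. The factor $(2d_2/n)^{|E|}$ exactly pays for the $\binom{t^2/2}{|E|}$ choices of $E$, and the remaining $e^{-\Omega(\xi^2 nd_2)}$ kills the $\binom{n}{\xi n}$ choices of $V_1$ (this last inequality is where $\alpha=10^5$ and $d_2\ge d_1/150$ are used). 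The deterministic Lemma~\ref{lemma:ruining} then says that if no such bad pair $(V_1,E)$ exists, one can greedily add boosters from $G_2$ until $(G_1\cup G_2)[V_1]$ is Hamiltonian. So the missing idea in your write-up is precisely this union bound over arbitrary $E$ together with the simultaneous estimate on $\{G_2\supset E\}\cap\{G_2\cap B=\emptyset\}$; it is the heart of the argument, not a bookkeeping afterthought.
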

				
				The proof of Lemma~\ref{lemma:hamiltonian} consists of two parts. First we identify a deterministic property that is sufficient to make $(G_1 \cup G_2)[V_1]$ Hamiltonian, and then, using the configuration model, we show that $\gndt$ possesses this property with high probability.
				
				\begin{LEMMA} \label{lemma:ruining}
					Let $H_1$ and $H_2$ be graphs on vertex set $V_1$. Suppose that for any edge set $E^\prime \subset E(H_2)$ with $|E^\prime| \leq |V_1|$, 
					$$H_1 \cup E^\prime \text{ is Hamiltonian, \quad or \quad } B(H_1 \cup E^\prime) \cap E(H_2) \neq \emptyset.$$
					Then the graph $H_1 \cup H_2$ Hamiltonian.
				\end{LEMMA}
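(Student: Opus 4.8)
The plan is a greedy \emph{booster absorption} argument in the spirit of P\'osa. Starting from $E_0=\emptyset$, I would build a nested sequence of edge sets $\emptyset=E_0\subseteq E_1\subseteq\cdots\subseteq E(H_2)$ with $|E_i|=i$, stopping as soon as $H_1\cup E_i$ is Hamiltonian. At a step where $H_1\cup E_i$ is not Hamiltonian and $i=|E_i|\le|V_1|$, the hypothesis applies to $E^\prime=E_i$ and yields a booster $e_{i+1}\in B(H_1\cup E_i)\cap E(H_2)$. Since a booster is by definition a non-edge of $H_1\cup E_i$ and $E_i\subseteq E(H_1\cup E_i)$, we have $e_{i+1}\notin E_i$, so $E_{i+1}:=E_i\cup\{e_{i+1}\}$ is again a subset of $E(H_2)$ of size exactly $i+1$, and by the definition of a booster either $H_1\cup E_{i+1}$ is Hamiltonian or $\ell(H_1\cup E_{i+1})\ge\ell(H_1\cup E_i)+1$.

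Next I would check that this process terminates with a Hamiltonian graph after at most $|V_1|$ steps. Maintaining the invariant ``either we have already stopped, or $\ell(H_1\cup E_i)\ge i$'' is a one-line induction using the previous paragraph (the base case is $\ell(H_1)\ge 0$). Now any path in a graph on $|V_1|$ vertices uses at most $|V_1|-1$ edges, so $\ell(H_1\cup E_i)\le|V_1|-1$ for all $i$; hence if the process has not stopped by step $i$ then $i\le\ell(H_1\cup E_i)\le|V_1|-1$. In particular every invocation of the hypothesis is legitimate (it is applied with $|E^\prime|=i\le|V_1|-1\le|V_1|$), and the process cannot survive step $|V_1|$: if it had, the invariant would force $\ell(H_1\cup E_{|V_1|})\ge|V_1|>|V_1|-1$, a contradiction, so $H_1\cup E_{|V_1|}$ must in fact be Hamiltonian.

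Finally, when the process stops, say with $E_k\subseteq E(H_2)$ and $k\le|V_1|$, the graph $H_1\cup E_k$ is Hamiltonian on the vertex set $V_1$; since $H_1\cup H_2$ contains $H_1\cup E_k$ as a spanning subgraph, $H_1\cup H_2$ is Hamiltonian as well. I do not expect a genuine obstacle here: the whole content is the standard P\'osa rotation--extension technique packaged via the notion of boosters $B(G)$, and the only point needing a little care is the bookkeeping on $|E_i|$ --- namely that each new booster strictly enlarges $E^\prime$ (because boosters are non-edges) and that $|V_1|-1$ length increments always fit under the bound $|E^\prime|\le|V_1|$, which is exactly why that bound was chosen in the statement.
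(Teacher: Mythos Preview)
Your proof is correct and follows essentially the same approach as the paper: build $E_0=\emptyset\subseteq E_1\subseteq\cdots\subseteq E(H_2)$ by repeatedly adding a booster from $E(H_2)$, and use the strict increase of $\ell(H_1\cup E_i)$ to force termination within $|V_1|$ steps. Your write-up is in fact more careful than the paper's about the bookkeeping (boosters are non-edges so $|E_i|$ strictly grows, and the invariant $\ell(H_1\cup E_i)\ge i$ keeps $|E_i|\le |V_1|$ so the hypothesis always applies).
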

				
				\begin{proof}
					We will build a subset of $E(H_2)$ such that its addition to $H_1$ creates a Hamiltonian graph. Start with $E_0 = \emptyset$. Assume that $E_i$ is a subset of $i$ edges in $E(H_2)$. If the graph $H_1 \cup E_i$ is Hamiltonian, we are done. Otherwise, by hypothesis,
				$E(H_2) \cap B(H_1 \cup E_i) $ contains an edge $e$, so we set $E_{i+1} = E_i \cup \{e\}$.
				
					In each step $i$, we have $\ell(H_1 \cup E_{i+1}) > \ell(H_1 \cup E_i)$, so the process terminates after at most $|V_1|$ steps, with a Hamiltonian graph $H_1 \cup E_i$.
				\end{proof}
					

					
					\begin{LEMMA} \label{lemma:hitting}
						Let $G_1$ be a $d_1$-regular graph on $V$ with properties (P1) and (P2), where $|V|=n$ and $d_1$ is sufficiently large.
						Let $G_2 = \gndt$ for $\frac{d_1}{150} \leq d_2 \leq \frac{d_1}{100}$. We say that $G_2 \in A_{G_1}$  (or $A_{G_1}$ occurs) if there exists a $G_1$-dense subset $V_1 \subset V$, and an edge set $E \subset \binom{V_1}{2}$, $|E| \leq |V_1|$, such that $G_2$ contains $E$ and does not intersect $B((G_1+E)[V_1])$. It holds that $\pr{A_{G_1}} = o(1)$.					
					\end{LEMMA}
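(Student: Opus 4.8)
The plan is a first–moment (union bound) argument over the pairs $(V_1,E)$ that appear in the definition of $A_{G_1}$, carried out in the configuration model for $G_2$. For a fixed $G_1$-dense set $V_1$ and a fixed $E\subset\binom{V_1}{2}$ with $|E|\le|V_1|$, the key point is that the graph $(G_1+E)[V_1]$ has so many boosters that it is very unlikely for the random $G_2$ to contain $E$ yet avoid all of them; Lemma~\ref{lemma:edges} provides precisely such a quantitative bound once we know that $B:=B((G_1+E)[V_1])$ is large. Summing these bounds over all admissible $(V_1,E)$ should give $\pr{A_{G_1}}=o(1)$.

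First I would establish the booster count. Since $E\subset\binom{V_1}{2}$ we have $(G_1+E)[V_1]=G_1[V_1]+E$, and adding edges destroys neither connectivity nor any lower bound on a neighbourhood, so $(G_1+E)[V_1]$ inherits from $G_1[V_1]$ — a $(|V_1|/4)$-expander by Lemma~\ref{lemma:expander} — the property of being a $(|V_1|/4)$-expander. Lemma~\ref{lemma:boosters} then gives $|B|\ge\frac12(|V_1|/4)^2=|V_1|^2/32$, and $B$ is disjoint from $E$ by definition of a booster. Writing $v=|V_1|$ and recalling $\frac{\alpha\log d_1}{4d_1}n\le v\le\frac{\alpha\log d_1}{2d_1}n$ and $d_1/150\le d_2\le d_1/100$, for $d_1$ large the hypothesis $|E|\le v$ lies comfortably inside $\min\{\frac{|B|d_2}{4v},\frac{nd_2}{20}\}$, because $\frac{|B|d_2}{4v}\ge\frac{vd_2}{128}\ge v$ and $\frac{nd_2}{20}\ge n\ge v$. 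Hence Lemma~\ref{lemma:edges}, applied with degree $d_2$ in the configuration model $G_2=\mofp$, $P\in\pndt$, yields
$$\prp{\mofp\supset E\ \text{and}\ \mofp\cap B=\emptyset}\le\left(\frac{2d_2}{n}\right)^{|E|}e^{-\frac{2|B|d_2}{5n}}\le\left(\frac{2d_2}{n}\right)^{|E|}e^{-\frac{v^2d_2}{80n}}.$$

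Next I would take the union bound. There are at most $\binom nv$ sets $V_1$ of size $v$, and summing the above over $E\subset\binom{V_1}{2}$ with $|E|\le v$ the weight $(2d_2/n)^{|E|}$ is absorbed by $\sum_{j=0}^{v}\binom{\binom v2}{j}(2d_2/n)^j\le(v+1)(evd_2/n)^v$, the summand being increasing in $j$ throughout $0\le j\le v$ in our parameter range. Using $\binom nv(evd_2/n)^v\le(e^2d_2)^v$ this gives
$$\pr{A_{G_1}}\le\sum_{v}\binom nv(v+1)\left(\frac{evd_2}{n}\right)^ve^{-\frac{v^2d_2}{80n}}\le\sum_v(v+1)\exp\!\left(v\log(e^2d_2)-\frac{v^2d_2}{80n}\right),$$
where $v$ ranges over the at most $n$ integers in $[\frac{\alpha\log d_1}{4d_1}n,\frac{\alpha\log d_1}{2d_1}n]$. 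Here the size of $\alpha$ is decisive: since $v\ge\frac{\alpha\log d_1}{4d_1}n$ and $d_2\ge d_1/150$ we get $\frac{v^2d_2}{80n}=v\cdot\frac{vd_2}{80n}\ge v\cdot\frac{\alpha\log d_1}{48000}\ge2v\log d_1$ for $\alpha=10^5$, while $v\log(e^2d_2)\le v(2+\log d_1)\le\tfrac32 v\log d_1$ for $d_1$ large, so each exponent is at most $-\tfrac12 v\log d_1\to-\infty$. Therefore $\pr{A_{G_1}}\le\mathrm{poly}(n)\cdot e^{-\Omega(n)}=o(1)$ in the configuration model, and by the contiguity of $\mofp$ with $\gndt$ (recalling that $G_1$ is deterministic) also for $G_2=\gndt$, as claimed.

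The main obstacle is the balance of exponents in the third step. The entropy of a pair $(V_1,E)$ is of order $v\log(n/v)\asymp v\log d_1$ for $V_1$ plus a comparable $v\log(vd_2/n)\lesssim v\log\log d_1$ for $E$, whereas the saving extracted from Lemma~\ref{lemma:edges} is $\frac{|B|d_2}{n}\asymp\frac{v^2d_2}{n}\asymp v\log d_1$ — the same order. Everything therefore hinges on choosing $\alpha$ (and confining $d_2$ to $[d_1/150,d_1/100]$) large enough that the quadratic-in-$v$ booster gain beats the entropy with a fixed multiplicative margin; the only genuine work is to track the constants faithfully through Lemmas~\ref{lemma:edges}, \ref{lemma:expander} and~\ref{lemma:boosters}, the remaining estimates being routine uses of $\binom nv\le(en/v)^v$ and $1-x\le e^{-x}$.
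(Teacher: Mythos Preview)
Your proposal is correct and follows essentially the same approach as the paper: pass to the configuration model for $G_2$, use Lemmas~\ref{lemma:expander} and~\ref{lemma:boosters} to guarantee $|B((G_1+E)[V_1])|\ge |V_1|^2/32$, apply Lemma~\ref{lemma:edges} for the probability bound, and take a union bound over $(V_1,E)$, tracking the constants so that $\alpha=10^5$ and $d_2\in[d_1/150,d_1/100]$ make the exponent win. Your write-up is in fact slightly more careful than the paper's in two places---you explicitly verify the hypothesis $|E|\le\min\{|B|d_2/(4|V_1|),\,nd_2/20\}$ of Lemma~\ref{lemma:edges}, and you note that adding $E$ preserves the expander property---but the argument is otherwise identical.
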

					
					\begin{proof}
						We will prove the claim for 
						$G_2$ sampled according to the configuration model, which is contiguous to the uniform model $\gndt$. This allows us to apply Lemma~\ref{lemma:edges}, which gives us a precise estimate on the probability of (non-)occurrence of certain edge sets. Let $P \in \pndt$ be chosen uniformly at random. We will actually bound the probability that the induced multigraph $\mathcal{M}(P)$ is in $A_{G_1}$, denoted  by $\prp{A_{G_1}}$, with a slight abuse of notation for not renaming the event $A_{G_1}$ itself.
						
						Fix a $G_1$-dense subset $V_1 \subset V$ with $|V_1| = \xi n $, and $E \subset \binom{V_1}{2}$ with $|E| = m \leq |V_1|$. Recall that since $V_1$ is $G_1$-dense, $\xi n \geq \frac{\alpha \log d_1}{4d_1} n = \frac{10^5 \log d_1}{4d_1} n$.  Note that the graph $G_1+E$ is a $\left( \frac{|V_1|}{4} \right)$-expander, so we apply Lemma~\ref{lemma:boosters}, which says that the set of boosters $B= B((G_1+E)[V_1])$ contains at least $ 2^{-5} \xi^2 n^2$ edges.
						
						Applying Lemma~\ref{lemma:edges} to $E$ and $B$, we get
						$$\prp{\mathcal{M}(P) \supset E \text{ and } \mathcal{M}(P) \cap B = \emptyset } \leq \left(\frac{2d_2}{n} \right)^m e^{-\frac {2}{5 \cdot 2^5} \xi^2  nd_2}. $$
						
						Now we can take the union bound over all choices of $E$ and $V_1$. We crudely bound the number of ways to choose $V_1$ by $n \binom{n}{\xi n}$.
						\begin{align*}
							\prp{A_{G_1}}  \leq n \binom{n}{\xi n} \sum_{m=1}^{\xi n} \binom{\frac{\xi^2 n^2}{2}}{m}   \left(\frac{2d_2}{n} \right)^m e^{-\frac{1}{5 \cdot 2^4} \xi^2  nd_2}.
						\end{align*}
						
						The term $ \binom{\frac{\xi^2 n^2}{2}}{m}   \left(\frac{2d_2}{n} \right)^m \leq  \left( \frac{e\xi ^2 n d_2}{m} \right)^m$ is increasing in $m$ in the given range, and hence
						$$\prp{A_{G_1}} \leq n \cdot \xi n \cdot \left( e\xi^{-1}  \cdot e \xi  d_2 \cdot e^{-\frac {1}{5 \cdot 2^4}\xi d_2} \right)^{\xi n } .$$
						
						Introducing the value of $\xi$, the term in brackets is at most
						$$e^2 d_2  e^{ -\frac {  10^5}{4\cdot 5 \cdot 2^4} \frac{d_2 \log d_1}{ d_1} } \leq e^2 d_2 d_1^{- \frac{300 d_2}{ d_1}}. $$
						
						For $ d_2 \in \left[\frac{d_1}{150}, \frac{d_1}{100} \right]$ the term above is upper-bounded by $ d_1^{1 -\frac{300}{150}}$, so
						$$							\prp{A_{G_1}}  \leq \xi n^2 e^{-\Omega(\xi n)} = e^{-\Omega(\xi n)},$$
						 as claimed.
					\end{proof}
				
				\begin{proof}[Proof of Lemma~\ref{lemma:hamiltonian}]
					Since $G_1$ satisfies (P1) and (P2), for $G_2 = \gndt$ it holds with high probability that $G_2 \notin A_{G_1}$. Hence, given a $G_1$-dense set $V_1 \subset V$ we can apply Lemma~\ref{lemma:ruining} to $G_1[V_1]$ and $G_2[V_1]$ to find a Hamilton cycle in $(G_1 \cup G_2)[V_1]$, as required.
				\end{proof}
				
				We are now ready to put together the proof.
				\begin{proof}[Proof of Theorem~\ref{thm:almostn}]
				    For a given $d$, set $d_2 = 2 \cdot \left \lceil \frac{d}{300} \right \rceil $ and $d_1 = d- d_2$. Let $d$ be large enough so that $d_2 \leq \frac{1}{100} d_1$, and for Lemma~\ref{lemma:quasirandom} and Lemma~\ref{lemma:hamiltonian} to hold. Moreover, by choosing $d_2$ to be even, we ensured that whenever $nd$ is even (so that $\gnd $ is non-empty), $nd_1$ and $nd_2$ are also even.
					
					Generate  $G_1 = \gndo$ and $G_2  = \gndt$ on vertex set $V$. Suppose that $G_1$ is has properties (P1) and (P2), and $G_2$ satisfies the conclusion of Lemma~\ref{lemma:hamiltonian}. By Lemma~\ref{lemma:quasirandom} and Lemma~\ref{lemma:hamiltonian}, this holds with high probability. We claim that in this case $\piper (G_1 \cup G_2) > \left(1- \frac{\alpha \log d_1}{d_1} \right)n $, where $\alpha = 10^5$ as before. Let $c: V \to \left[\left(1- \frac{\alpha\log d_1}{d_1} \right)n \right]$ be a given coloring. 
					
					We first use Lemma~\ref{lemma:splitting} to find $G_1$-dense sets $V_1, V_2 \subset V$ with the same coloring. Then by Lemma~\ref{lemma:hamiltonian}, we conclude that the graphs $(G_1 \cup G_2)[V_i]$ are Hamiltonian, for $i = 1, 2$. Let $C_1$ and $C_2$ be Hamilton cycles on $V_1$ and $V_2$. $G_1$ satisfies (P2), which implies that it contains an edge between some two vertices $v_1 \in V_1$ and $v_2 \in V_2$. We form the required path $S$ by going along $C_1$, using $v_1v_2$ to skip to $V_2$ and then going along $C_2$. The segments $S[V_1]$ and $S[V_2]$ give an anagram in $c$, as required.
					
					It remains to express the bound in terms of $d$. Note that $d_1$ lies between $\left(1 - \frac{1}{100}\right)d$ and $d$, so
					$$\frac{\alpha \log d_1}{d_1} \leq \frac{ 10^5 \log d}{d_1} \leq \frac{ 2 \cdot 10^{5} \log d}{d}. $$
					Hence $\piper (G_1 \cup G_2) > \left(1- \frac{2 \cdot 10^{5} \log d}{d} \right)n $, and by contiguity, the same holds for $\gnd$ with high probability.
				\end{proof}

        \section{Concluding Remarks}
            In this paper we studied anagram-free colorings of graphs, and showed that 
there are very sparse graphs in which anagrams cannot be avoided unless we basically 
give each vertex a separate color. 
 Our research suggests several interesting questions, some of which we mention here.
           
 The first question concerns the lower bound on the anagram-chromatic number for
trees. Is there a family of trees $T(n)$ on $n$ vertices for which  $\piper \left(T(n) \right)\ge \eps \log n$
for some positive constant $\eps>0$? We remark that this is the case  for 
the analogous problem of finding the \emph{anagram-chromatic index} of a tree.
Indeed,  a simple counting argument (cf. Proposition~\ref{prop:planar2}) shows  that
 if instead of vertex colorings, we color 
edges of a graph, then to avoid anagrams in the complete binary tree of depth $h$,
we need to use at least $\lceil \frac{1}{4} h \rceil$ colors.            
              
In estimating the anagram-chromatic number of planar graphs we relied only on 
the fact that they have small separators. It would be interesting to know 
a better lower bound on $\piper(G)$ for such graphs. In particular, we wonder if   
 there exists a family $H_n$ of planar
 graphs on $n$ vertices such that $\piper(H_n) \geq n^{\epsilon}$ for some 
 absolute constant $\epsilon >0$?
               
Let $G(n,d)$ denote the graph with  the largest 
anagram-chromatic number among all graphs  $G$ on $n$ vertices with $\Delta(G)\le d$.                 
 Our main result shows that if $d$ is large enough then 
 $\piper(G(n,d))\ge n \left( 1-C\frac{\log d}{d} \right)$, 
 while for  $d=4$ 
 we can only provide a construction which gives $\piper(G(n,4)) \geq \frac{\sqrt{n}}{2 \log n}$. 
We believe that there exist cubic graphs for which the anagram-chromatic number 
grows linearly with the order of the graph.
                
It would be nice to know how fast the function $f(d)=1-\limsup_{n\to\infty}\piper(G(n,d))/n$
decreases with $d$. Let us recall that from Proposition~\ref{prop:indep} and Theorem~\ref{thm:almostn}
it follows that 
$$\frac{1}{d}\le f(d) =  O\Big(\frac{\log d} d \Big)\, .$$ 
We do not know the correct bound, but we have good reasons to believe that the upper bound can be improved. Indeed, consider a graph 
which is a union of $2n/d$ cliques of size $d/2$ and a random $n$-vertex $d/2$-regular graph. 
We think that using such a construction one can show that
$f(d)\le (\log d)^{1/2+o(1)}/d$ but the proof looks quite involved and would probably not 
be worth the effort since it is by no means clear whether it would give the
right order of $f(d)$.
                      
Finally, Lemma~\ref{lemma:hamiltonian} motivates questions on Hamiltonicity of small induced subgraphs of $\gnd$. Pursuing our proof outline, we can prove the following.
\begin{CLAIM} \label{claim:smallsets}
    There is a constant $C$ such that with high probability, $G=\gnd$ has the following property. For any vertex set $V_1\subset [n]$ of order at least $C  \sqrt{ \frac{\log d}{d}}  n$, if the graph $H = G[V_1]$ has minimum degree at least $\frac{d}{10n}|V_1|$, then $H$ is Hamiltonian. 
\end{CLAIM}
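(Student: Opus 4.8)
The plan is to adapt the argument behind Lemma~\ref{lemma:hamiltonian}, but with one essential difference. Here $H:=G[V_1]$ already contains \emph{all} edges of $G$ with both endpoints in $V_1$, so there is no auxiliary graph $G_2$ whose edges one could add to hit boosters. Consequently making $H$ merely a $(|V_1|/4)$-expander does not suffice; instead we must show that $H$ is a strong enough expander to be Hamiltonian outright, via Pósa's rotation-extension technique, and the role of the hypothesis $\delta(H)\ge\frac{d}{10n}|V_1|$ is precisely to supply the missing strength.

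First I would record that, by Lemma~\ref{lemma:quasirandom}, whp $G=\gnd$ satisfies (P1) and (P2). A key point is that these are properties of $G$ alone, with no reference to $V_1$: once we condition on them, the rest of the proof is deterministic, and — unlike in Lemma~\ref{lemma:hamiltonian} — we incur \emph{no} union-bound cost over the exponentially many candidate sets $V_1$. Writing $|V_1|=\xi n$, we have $\xi\ge C\sqrt{\log d/d}$ and $\delta(H)\ge\frac{d\xi}{10}\ge\frac{C}{10}\sqrt{d\log d}$, which for $d$ large dominates $\log d$.

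Second, I would prove that $H$ is connected and satisfies $|N_H(U)|\ge 2|U|$ for every $U\subset V_1$ with $|U|\le\frac{|V_1|}{3}-\frac{10\log d}{d}n$. For $|U|\le\frac{10\log d}{d}n$ with $|N_H(U)|<2|U|$: put $T=U\cup N_H(U)$, so $|T|<3|U|\le\frac{30\log d}{d}n$ and all edges meeting $U$ lie in $T$; then $e_G(T)\ge\tfrac12|U|\delta(H)\ge\frac{d\xi}{20}|U|$, while (P1) gives $e_G(T)<300|U|\log d$, and $\frac{d\xi}{20}>300\log d$ for $d$ large — a contradiction. For $\frac{10\log d}{d}n\le|U|\le\frac{|V_1|}{3}-\frac{10\log d}{d}n$ with $|N_H(U)|<2|U|$: the set $W:=V_1\setminus(U\cup N_H(U))$ is disjoint from $U$, has $|W|>|V_1|-3|U|\ge\frac{30\log d}{d}n$, and spans no $G$-edge to $U$, contradicting (P2). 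Connectivity of $H$ follows exactly as in Lemma~\ref{lemma:expander} (a smallest component is too small to violate the expansion just proven yet too large to avoid (P2)). Hence $H$ is a connected $\bigl(\tfrac{|V_1|}{3}-\tfrac{10\log d}{d}n\bigr)$-expander with minimum degree $\ge\frac{d\xi}{10}$, and I would then conclude that $H$ is Hamiltonian by the rotation-extension argument used for Lemma~\ref{lemma:boosters}.

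The main obstacle is this last step: (P1) bounds $e_G(U)$ only for $|U|=O\!\bigl(\frac{\log d}{d}n\bigr)$, far below $|V_1|/3$, so a failure of expansion for a set $U$ of size close to $|V_1|/3$ cannot be refuted by a direct edge count. Thus the clean part of the argument only yields a $\bigl(\tfrac{|V_1|}{3}-\tfrac{10\log d}{d}n\bigr)$-expander, rather than the $\lceil|V_1|/3\rceil$-expander that the Pósa criterion wants; bridging this $o(|V_1|)$-wide gap — which is exactly where the minimum-degree hypothesis must be brought to bear, e.g. by controlling the few leftover vertices inside a would-be deficient set $U$ (each such vertex sends all of its $\ge\frac{d\xi}{10}$ edges into $N_H(U)$, which one plays against (P1) once more) — is the delicate point that carries the proof.
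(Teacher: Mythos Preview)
Your premise is mistaken: the paper does \emph{not} abandon the $G_1\cup G_2$ decomposition here. It sets $d_2=\frac{C}{20}\sqrt{d\log d}$, $d_1=d-d_2$, and works in the contiguous model $\gndo\cup\gndt$. The role of the minimum-degree hypothesis $\delta(G[V_1])\ge\frac{\xi d}{10}$ is that, since each vertex has at most $d_2\le\frac{\xi d}{20}$ edges in $G_2$, the graph $G_1[V_1]$ still has minimum degree at least $\frac{\xi d}{20}$; combined with (P1) and (P2) for $G_1$, this makes $G_1[V_1]$ (and every supergraph of it on $V_1$) a $(|V_1|/4)$-expander with $\Theta(\xi^2 n^2)$ boosters. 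The edges of $G_2$ are then used to hit those boosters, exactly as in Lemma~\ref{lemma:hitting}, and the union bound over all $V_1$ and all $E\subset\binom{V_1}{2}$ succeeds because $\xi d_2\ge\frac{C^2}{20}\log d$, so that the term $d_2\,e^{-\Omega(\xi d_2)}$ is below $1$. The threshold $\xi\ge C\sqrt{\log d/d}$ is precisely where the two constraints on $d_2$---small enough that $d_2\le\frac{\xi d}{20}$, large enough that the booster-hitting calculation goes through---can be met simultaneously, as the paper remarks right after the claim.

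Your alternative, deducing Hamiltonicity of $H$ deterministically from (P1), (P2) and the minimum-degree bound, has a genuine gap at the final step. Lemma~\ref{lemma:boosters} says only that a $p$-expander has at least $p^2/2$ \emph{boosters}, and a booster is by definition a \emph{non-edge} whose addition would lengthen the longest path or create a Hamilton cycle. The existence of many boosters does not make $H$ itself Hamiltonian; one needs a source of fresh edges (here, the independent randomness of $G_2$) to land on a booster and iterate. Consequently, even if you succeeded in upgrading $H$ to a full $(\lceil|V_1|/3\rceil)$-expander, you could not ``conclude that $H$ is Hamiltonian by the rotation-extension argument''---that argument simply does not yield Hamiltonicity on its own. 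The $o(|V_1|)$ gap you highlight is therefore not the crux; the missing ingredient is the second random layer, which your outline explicitly discards.
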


To see this, take $G_2 = \gndt$ for $d_2 = \frac{C}{20}\sqrt{d \log d}$, and $G_1 = \gndo$ for $d_1 = d-d_2$. Consider $G = \gndo \cup \gndt$.  Let $V_1$ and $H = G[V_1]$ satisfy the hypothesis, and denote $|V_1| = \xi n$ with $\xi \geq C \sqrt{\frac{\log d}{d}}$.
Since the graph $G[V_1]$ has minimum degree at least $\frac{\xi d}{10}$, and we ensured $d_2 \leq \frac{\xi d}{20}$, $G_1[V_1]$ has minimum degree at least $\frac{\xi d}{20}$. This guarantees that $G_1[V_1]$, as well as any graph on $V_1$  containing it, has $\Theta(\xi^2 n^2)$ boosters. On the other hand,
the condition $d_2 e^{-\Omega(\xi d_2)} <1$ implies that $G_2$ hits those boosters with high probability (see the calculation in Lemma~\ref{lemma:hitting}). Hence $G[V_1]$ is Hamiltonian for any such $V_1$, and by contiguity, $\gnd$ satisfies the claim.

The above discussion leads to the natural question, what is the smallest possible lower bound on $|V_1|$ in Claim~\ref{claim:smallsets}? Note that $|V_1| = C\sqrt{\frac{\log d}{d}}n$ is the best we can get from our approach. Namely, the above-mentioned conditions require
$\Omega{\left(\frac{1}{\xi} \log \left( \frac{1}{\xi} \right) \right)} = d_2 \leq \frac{\xi d}{20}$, i.e~$\xi = \Omega \left(\sqrt{\frac{\log d}{d}}n\right)$.

We also give a lower bound on $|V_1|$. Using independent sets in $\gnd$, one can find an induced unbalanced bipartite subgraph of order $ \frac{\log d}{d} n$ with high minimum degree, which is obviously non-Hamiltonian. This observation implies that we need at least  $|V_1| \geq  \frac{\log d}{d} n$. We wonder how tight this estimate is.

\medskip

\noindent{\bf Acknowledgement.} This work was carried out when the second author visited the
Institute for Mathematical Research (FIM) of ETH Z\"urich.
He would like to thank FIM for the hospitality and for creating a stimulating research environment.
    

\end{document}